\newtheorem{theorem}{Theorem}[section]
\newtheorem{corollary}{Corollary}[section]
\newtheorem{lemma}{Lemma}[section]
\newtheorem{definition}{Definition}[section]
\journal{Journal of Number Theory}
\begin{document}

\begin{frontmatter}

%% Title, authors and addresses

%% use the tnoteref command within \title for footnotes;
%% use the tnotetext command for theassociated footnote;
%% use the fnref command within \author or \address for footnotes;
%% use the fntext command for theassociated footnote;
%% use the corref command within \author for corresponding author footnotes;
%% use the cortext command for theassociated footnote;
%% use the ead command for the email address,
%% and the form \ead[url] for the home page:
%% \title{Title\tnoteref{label1}}
%% \tnotetext[label1]{}
%% \author{Name\corref{cor1}\fnref{label2}}
%% \ead{email address}
%% \ead[url]{home page}
%% \fntext[label2]{}
%% \cortext[cor1]{}
%% \affiliation{organization={},
%%             addressline={},
%%             city={},
%%             postcode={},
%%             state={},
%%             country={}}
%% \fntext[label3]{}

\title{A step towards proving de Polignac's Conjecture}

%% use optional labels to link authors explicitly to addresses:
%% \author[label1,label2]{}
%% \affiliation[label1]{organization={},
%%             addressline={},
%%             city={},
%%             postcode={},
%%             state={},
%%             country={}}
%%
%% \affiliation[label2]{organization={},
%%             addressline={},
%%             city={},
%%             postcode={},
%%             state={},
%%             country={}}

\author{J. Sellers}

%\affiliation{organization={}%,%Department and Organization
%            addressline={}, 
%            city={},
%            postcode={}, 
%            state={},
%            country={}}

\begin{abstract}
Consider the set of all natural numbers that are co-prime to  primes less than or equal to a given prime. Then given a consecutive pair of numbers in that set with an arbitrary even gap, we prove there exists an unbounded number of actual prime pairs with that same gap. This conditional proof of de Polignac's conjecture constitutes a proof for a range of known gaps, but the full conjecture requires additional proof that such number pairs exist for all even gaps.
\end{abstract}

%%Graphical abstract
%\begin{graphicalabstract}
%\includegraphics{grabs}
%\end{graphicalabstract}

%%Research highlights
%\begin{highlights}
%\item Prospective primes, $\widetilde{P}:=N$ co-prime to $P_{k}\#$, are the only locations where primes $>P_{k}$ occur
%
%\item Gaps $g=\widetilde{P}'-\widetilde{P}$ are deterministic, repeating at intervals of $P_{k}\#$ and in multiples as $k\rightarrow k+1$; prime gaps are not
%
%\item Given $g=\widetilde{P}'-\widetilde{P}$ and $M$, $g$ persists as $k\rightarrow l$, for some $P_{l}>M$, in the interval $(P_{l},P_{l+1}^2)$ as an actual prime gap 
%
%\item de Polignac's Conjecture is therefore proved for a range of known gaps between prospective primes
%\end{highlights}

%\begin{keyword}
%%% keywords here, in the form: keyword \sep keyword
%de Polignac \sep twin primes\sep prime gaps \sep progressions
%%% PACS codes here, in the form: \PACS code \sep code
%
%%% MSC codes here, in the form: \MSC code \sep code
%%% or \MSC[2008] code \sep code (2000 is the default)
%11A41 \sep 11N05 \sep 11B25 \sep 11N32
%\end{keyword}

\end{frontmatter}

%% \linenumbers

%% main text
\section{Introduction}
French mathematician Alphonse de Polignac conjectured in 1849 that: "Every even number is the difference of two  consecutive primes in infinitely many ways."\cite{dP,LD}  The subsumed twin prime conjecture is more well known and is considered older, but its origin is not otherwise documented. de Polignac's conjecture, a generalization for arbitrary even gaps, is taken as the earliest documented statement that is inclusive of the twin prime conjecture. Work on prime gaps has application to both de Polignac's ocnjecture and the twin prime conjecture, but the twin prime conjecture appears to have been the primarily goal of most work.
 
Maynard in \cite{JM1} gives an excellent overview of approaches to the twin prime conjecture. The earliest result comes in the work of Hardy and Littlewood \cite{HL} where they proposed a prime pair counting function using a modified assumption about the Riemann Hypothesis to characterize the density of prime pairs. 

Sieve theory has made the most significant recent progress. Originally proposed by Brun \cite{VB} as a modified form of the sieve of Eratosthenes and applied to the Goldbach Conjecture. His significant result proved that the sum of the reciprocal of twin primes converges. Sieve theory was further developed by Selberg \cite{AS} and has made significant advances applying the work of Bombieri, Friedlander, and Iwaneic \cite{BFI1,BFI2,BFI3} on the distribution of primes in arithmetic progression and then applying the results of Goldston, Pintz, and Yildririm \cite{GPY} on primes in tuples. This culminated in the work of Zhang \cite{YZ} who combined these approaches and proved the existence of a finite, though very large limit on gaps, for which there are infinite prime pairs. His method was subsequently modified to significantly reduce the gap limit, to 246,.\cite{JM2, poly1, poly2}. 

Those latter approaches formulated sieves using a product of linear functions chosen to ensure finding at least two prime numbers in an infinite number of tuples of fixed finite size. Therefore, while it has produced significant progress, it does not demonstrate a result for prime pairs of a specific gap and is known to have inherent limitations for reducing the gap limit further.

The primary difference in this paper is that we work in the realm of relative primes rather than attempting to deal with primes directly, because relative primes are more easily predicted. The set of numbers prime to $P\le P_{k}$ includes the set of all prime numbers greater than $P_{k}$ and all composite numbers whose prime factors are all greater than $P_{k}$. All of these fall in the the two arithmetic progressions $6n+5$ and $6n+7$. All such relative primes between the composite numbers are actual prime numbers. The difficulty in predicting prime numbers derives from the inability to order composite numbers beyond $P_{k+1}^2<P_{k+1}P_{k+2}$ without knowing their actual values. However, we do know that all numbers less than $P_{k+1}^2$ that are prime to $P<P_{k}$ are actual prime numbers. In that domain our results are applicable to actual prime numbers.

The various combinations of prime factors $P\le P_{k}$ repeat identically in successive sequences of $P_{k}\#$ numbers. Using this, we define prospective primes, numbers prime to $P\le P_{k}$ for some $P_{k}$, among which all prime numbers geater than $P_{k}$ must occur.
We then apply a formulaic approach for the specification of prospective primes in successively larger sets of $P_{k}\# \rightarrow P_{k+1}\#$ numbers. We see that gaps between consecutive prospective primes propagate predictably between successively larger sets, whereas gaps between actual primes do not. This allows us to assess their distribution directly and prove they exist in a range where they must also be actual prime pairs of a given gap.

This work represents an extension of \cite{JS} which addressed only twin primes, extending it to gaps of arbitrary even numbers. In this approach there are two parts to proving de Polignac's conjecture.  Part one, shown in this work, proves that given any consecutive prospective prime pair of even gap $g$, there exists an unbounded number of actual prime pairs with gap $g$. The second part, partially addressed in this work, requires one to prove there exists a pair of consecutive prospective primes for any arbitrary even gap. We show that such gaps exist between consecutive prospective prime pairs for $g=P_{k}\pm 1$ and $g=P_{k+1}-P_{k}$ for all $P_{k}$, however to complete the proof of de Polignac's conjecture one must show that such gaps exist for all even numbers.

\section{Definitions and framework}

$P$ = generic prime number 

$P_{k} = k^{th}$ prime number $(P_{1}=2)$

$P_{k}\#= \prod_{i=1}^{k}P_{i}$

$S_{k}:=\left\{N:  5\le N \le 4+ P_{k}\#\right\}; \; N\in \mathbb{N}$	

$S_{k}^{(m)}:=\left\{N: 5+mP_{k-1}\#\le N \le 4+(m+1)P_{k-1}\#\right\}$; where:

\[
0\le m\le P_{k}-1; \quad S_{k}^{(m)}\subset S_{k};\quad S_{k}^{(0)}=S_{k-1}
\]
\[
\cup_{m=0}^{P_{k}-1}S_{k}^{(m)}=S_{k}
\quad \& \quad
S_{k}^{(m)}\cap S_{k}^{(m')}=
\begin{cases}
	\emptyset &\textrm{if} \quad m\ne m'\\
	S_{k}^{(m)} &\textrm{if}\quad  m= m'
\end{cases}
\]

$\widetilde{P}_{\{k\}}=$ unspecified prospective prime number in $S_{k}$:	
	\[
 \forall{P} \left[P|\widetilde{P}_{\{k\}}\longrightarrow  P> P_{k} \right]
  \]
$\qquad\widetilde{P} =$  generic prospective prime ; prime to all $P\le P_{l}$ for unspecified $P_{l}$

$\:\;\widetilde{\mathbb{P}}_{k}:=\left\{\widetilde{P}_{\{k\}}\in S_{k}\right\}$, the set of all prospective primes in $S_{k}$

$\:\;\widetilde{\mathbb{P}}_{k}^{(m)}:=\left\{\widetilde{P}_{\{k\}}\in S_{k}^{(m)}\right\}$, the set of all prospective primes in subset $S_{k}^{(m)}$

$(\widetilde{PgP})=$ generic prospective prime pair with gap $g$

$(\widetilde{PgP})_{k}=$ generic prospective prime pair with gap $g$ in $S_{k}$

\begin{definition}
	Two prospective prime numbers, $\widetilde{P}_{\{k\}} < \widetilde{P}_{\{k\}}'$  are considered consecutive prospective prime numbers, when there is no prospective prime number between them, i.e.:
	\[
	\forall{N} \left[ \left(\widetilde{P}_{\{k\}}<N<\widetilde{P}_{\{k\}}'\right)\longrightarrow \left(P|N\rightarrow P\le P_{k}\right)\right]
	\]
\end{definition}

When we refer to prospective prime pairs we always mean consecutive prospective prime pairs.

Prospective prime numbers, prime to all $P\le P_{k}$ have the form:

\begin{equation}\label{E:genproprime}
	\widetilde{P}_{\{k\}}=\left(\begin{array}{c} 5 \\ 7 \end{array}\right)+\sum_{j=3}^{k}m_{j}P_{j-1}\#
\end{equation}

For $\widetilde{P}_{\{k\}}\in S_{k}$, $m_{k}$ is constrained by: $0\le m_{k} \le P_{k}-1$. In addition two values of $m_{j}$ for each $j$, corresponding separately to the 5 and 7 in (\ref{E:genproprime}) are disallowed to avoid a result divisible by $P_{j}$.\footnote{If we allow all values $m_{j} \ge 0$, then 
(\ref{E:genproprime}) represents the progressions $6n+5$ and $6n+7$.} This is best handled iteratively as in the following:

Going from $S_{k}\rightarrow S_{k+1}$ we get:

\begin{equation}\label{E: nextproprime}
	\widetilde{P}_{\{k+1\}}=\widetilde{P}_{\{k\}}+m_{k+1}P_{k}\# \qquad 0 \le m_{k+1} \le P_{k+1}-1
\end{equation}

$\widetilde{P}_{\{k+1\}}$ remains prime to $P\le P_{k}$ and will be prime to $P_{k+1}$ as long as we insist $P_{k+1} \nmid \widetilde{P}_{\{k+1\}}$, enforced by $m_{k+1}\ne \widehat{m}_{k+1}$, where:\footnote{(\ref{E: defmhat}) follows from (\ref{E: nextproprime}) letting $\widetilde{P}_{\{k+1\}}\bmod{P_{k+1}}=0$}.

\begin{equation}\label{E: defmhat}
	\widehat{m}_{k+1}=\frac{\alpha P_{k+1}-\widetilde{P}_{\{k\}}\bmod{P_{k+1}}}{\left(P_{k}\#\right)\bmod{P_{k+1}}}
\end{equation}
and where $\alpha$ is the smallest integer such that $\widehat{m}_{k+1}$ is an integer $\le P_{k+1}-1$. Also,
\[
\widetilde{P}_{\{k\}}\bmod{P_{k+1}}=0  \longleftrightarrow \alpha =0
\].
One can see from (\ref{E: defmhat}) that the values of $\widehat{m}_{k+1}$ are distinct for $\widetilde{P}_{\{k\}}$ belonging to distinct residue classes $\bmod{P_{k+1}}$ and all $\widetilde{P}_{\{k\}}$ in the same residue class $\bmod{P_{k+1}}$ have the same value for $\widehat{m}_{k+1}$.

Note that:

\begin{equation}\label{E: proprimeinsub}
\widetilde{P}_{\{k+1\}}=\widetilde{P}_{\{k\}}+m_{k+1}P_{k}\# \in  S_{k+1}^{(m_{k+1})}
\end{equation}

Therefore each prospective prime number in $S_{k}$ generates one prospective prime number in all but one subset of $S_{k+1}$. The one disallowed subset being $S_{k+1}^{(\widehat{m}_{k+1})}$.

It follows from (\ref{E: proprimeinsub}) that for $m'>m$ and if $\widetilde{P}_{\{k\}}\in S_{k}^{(m)}$ and $\widetilde{P}_{\{k\}}'\in S_{k}^{(m')}$ then $\widetilde{P}_{\{k\}}<\widetilde{P}_{\{k\}}'$. Therefore, consecutive prospective primes can only occur within a subset or between the largest prospective prime in one subset and the least prospective prime in the next sequential subset.

It is also important to know that prospective primes using (\ref{E:genproprime}) are unique in accordance with the following lemma.

\begin{lemma}\label{L: uniqueness}
Given 
\[
\widetilde{P}_{\{k\}}=\left(\begin{array}{c} 5\\7  \end{array}\right)+\sum_{j=3}^{k}m_{j}P_{j-1}\#
\]
 and
\[
\widetilde{P}_{\{k\}}'=\left(\begin{array}{c} 5\\7  \end{array}\right)+\sum_{j=3}^{k}m_{j}'P_{j-1}\#
\]
where $0\le m_{j},m_{j}' \le P_{j}-1$.
Then, 
\[
\widetilde{P}_{\{k\}}=\widetilde{P}_{\{k\}}' \longleftrightarrow m_{j}=m_{j}' \quad \textrm{for}\quad 3\le j \le k \quad \textrm{and both either start with 5 or both with 7}
\]

\end{lemma}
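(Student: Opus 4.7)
The reverse implication is immediate: identical coefficient sequences with the same leading summand (5 or 7) produce identical sums. For the forward direction I would argue this is essentially uniqueness of a mixed-radix (primorial) representation, and peel off the data one layer at a time via modular reduction.

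First I would pin down the leading summand. For every $j\ge 3$ we have $6 = P_2\#$ dividing $P_{j-1}\#$, so every term in the sum vanishes modulo $6$. Hence $\widetilde{P}_{\{k\}} \equiv 5 \pmod 6$ precisely when the representation starts with $5$, and $\widetilde{P}_{\{k\}} \equiv 1 \pmod 6$ precisely when it starts with $7$. If $\widetilde{P}_{\{k\}} = \widetilde{P}_{\{k\}}'$ then these residues agree, so both representations must begin with the same value, and we may cancel it.

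Next I would recover the coefficients $m_j$ by induction on $j$, starting at $j=3$. For the base step, reduce the equality $\sum_{j=3}^{k} m_j P_{j-1}\# = \sum_{j=3}^{k} m_j' P_{j-1}\#$ modulo $P_3\# = 30$. Every term with $j\ge 4$ vanishes (since $P_3\# \mid P_{j-1}\#$ for $j\ge 4$), leaving $6m_3 \equiv 6m_3' \pmod{30}$, i.e.\ $m_3 \equiv m_3' \pmod{P_3}$. The constraint $0\le m_3,m_3'\le P_3-1$ then forces $m_3=m_3'$. For the inductive step, once $m_3=m_3',\dots,m_{j-1}=m_{j-1}'$ have been established, cancel those terms and reduce what remains modulo $P_j\#$. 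All higher-indexed terms disappear, leaving $m_j P_{j-1}\# \equiv m_j' P_{j-1}\# \pmod{P_j\#}$; since $\gcd(P_{j-1}\#,P_j)=1$, this simplifies to $m_j \equiv m_j' \pmod{P_j}$, and the range $0\le m_j,m_j'\le P_j-1$ again forces equality. Iterating up to $j=k$ completes the proof.

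There is no real obstacle here — the argument is a clean iterated modular cancellation. The only ingredients one has to be sure of are the two divisibility facts being invoked, namely $P_i\# \mid P_{j-1}\#$ whenever $i\le j-1$, and the coprimality $\gcd(P_{j-1}\#,P_j)=1$ used to strip the common factor before applying the range constraint on $m_j$. Both are immediate from the definition of the primorial.
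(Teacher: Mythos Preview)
Your proof is correct and in fact more complete than the paper's. The paper subtracts the two representations to obtain $\sum_{j=3}^{k}(m_j-m_j')P_{j-1}\# \in \{0,2\}$, observes that every term on the left is a multiple of $P_2\#=6$ so the right side cannot be $2$ (which settles the leading summand), and then simply asserts that the only way the sum can vanish is for every $m_j-m_j'$ to be zero. That last assertion is precisely the uniqueness of the mixed-radix expansion, which the paper does not justify; your inductive modular peeling (reduce mod $P_j\#$, use $\gcd(P_{j-1}\#,P_j)=1$, invoke the range constraint) is exactly the argument needed to fill that gap. So the two approaches agree on the first step and your approach supplies what the paper omits in the second.
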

\begin{proof}
Taking: $\widetilde{P}_{\{k\}}'=\widetilde{P}_{\{k\}}$ gives:
\[
\sum_{j=3}^{k}(\pm\Delta m_{j})P_{j-1}\#=\left(\begin{array}{c} 0\\2  \end{array}\right)
\]
where the zero applies if $\widetilde{P}_{k}$ and $\widetilde{P}_{k}'$ both start with 5 or both start with 7, and 2 applies if one starts with 5 and the other starts with 7. 

The smallest finite value for the left hand side of the equation is $6$. Therefore it cannot be solved by finite integral values of $\Delta m_{j}$ and the only solution is $\sum_{j=3}^{k}(\pm\Delta m_{j})P_{j-1}\#=0$, where $\Delta m_{j}=0$ for all $j$.
\end{proof}

\section{Prospective Prime pairs with gap $g$}

We call prospective prime numbers, prime to all $P\le P_{k}$, consecutive if there are no numbers prime to all $P\le P_{k}$ between them.\footnote{Consecutive prime numbers may be taken as consecutive prospective prime numbers, but only if there are no prospective prime numbers between them. }
Gaps between consecutive prospective prime pairs both propagate unchanged and are increased when generating prospective numbers via  (\ref{E: nextproprime}). Increases occur due to the supplemental condition $m_{k+1}\ne \widehat{m}_{k+1}$. For example, let $\widetilde{P}_{\{k\}}<\widetilde{P}_{\{k\}}'< \widetilde{P}_{\{k\}}''$ be three consecutive prospective prime numbers in $S_{k}$, with gaps $g=\widetilde{P}_{\{k\}}'-\widetilde{P}_{\{k\}}$ and $g'=\widetilde{P}_{\{k\}}''-\widetilde{P}_{\{k\}}'$. Then Equation~(\ref{E: nextproprime}) gives the following numbers in $S_{k+1}$ which remain prime to $P\le P_{k}$:

\[
\widetilde{P}_{\{k+1\}}=\widetilde{P}_{\{k\}}+m_{k+1}P_{k}\#
\]
\[
\widetilde{P}_{\{k+1\}}'=\widetilde{P}_{\{k\}}'+m_{k+1}'P_{k}\#
\]
\[
\widetilde{P}_{\{k+1\}}''=\widetilde{P}_{\{k\}}''+m_{k+1}''P_{k}\#
\]

In cases where $m_{k+1}=m_{k+1}'=m_{k+1}''$ the gaps remain at $g$ and $g'$. However, we must consider the disallowed cases given by the supplemental condition (\ref{E: defmhat}), which is necessary so that the corresponding numbers in $S_{k+1}$ are prime to $P\le P_{k+1}$.

Note that $\widehat{m}_{k+1}$, $\widehat{m}_{k+1}'$, and $\widehat{m}_{k+1}''$ are distinct from each other unless $g\bmod{P_{k+1}}=0$, $g'\bmod{P_{k+1}}=0$, or $(g+g')\bmod{P_{k+1}}=0$. Then given that there are $P_{k+1}-1$ valid values for each,  there are the following cases when $\widehat{m}_{k+1}$, $\widehat{m}_{k+1}'$, and $\widehat{m}_{k+1}''$ are distinct:\footnote{$g^{?}$ represents an unspecified gap, which is the gap from the disallowed prospective prime to the next larger or smaller prospective prime, respectively.}\\

\begin{enumerate}
	\item \underline{$m_{k+1}=m_{k+1}'=m_{k+1}'' \ne \widehat{m}_{k+1},\widehat{m}_{k+1}', \widehat{m}_{k+1}''$:}  Yeilds $P_{k+1}-3$ cases where both gaps are preserved, because all three of the corresponding prospective primes are allowed in those corresponding subsets:
	
	\[
	\widetilde{P}_{\{k\}}\leftarrow g \rightarrow \widetilde{P}_{\{k\}}'\leftarrow g' \rightarrow \widetilde{P}_{\{k\}}'' \quad\Longrightarrow\quad \widetilde{P}_{\{k+1\}}\leftarrow g \rightarrow \widetilde{P}_{\{k+1\}}'\leftarrow g' \rightarrow \widetilde{P}_{\{k+1\}}''
	\]
	
	\item \underline{$m_{k+1}=m_{k+1}'=m_{k+1}'' = \widehat{m}_{k+1}$:}  Yeilds 1 case where only the second gap is preserved, because $\widetilde{P}_{\{k+1\}}$ is disallowed in $S_{k+1}^{(\widehat{m}_{k+1})}$:
	
	\[
	\widetilde{P}_{\{k\}}\leftarrow g \rightarrow \widetilde{P}_{\{k\}}'\leftarrow g' \rightarrow \widetilde{P}_{\{k\}}''\quad \Longrightarrow \quad \leftarrow g^{?} + g \rightarrow \widetilde{P}_{\{k+1\}}'\leftarrow g' \rightarrow \widetilde{P}_{\{k+1\}}''
	\]

	\item \underline{$m_{k+1}=m_{k+1}'=m_{k+1}'' = \widehat{m}_{k+1}'$:}  Yeilds 1 case where the two gaps merge, because $\widetilde{P}_{\{k+1\}}'$ is disallowed in $S_{k+1}^{(\widehat{m}_{k+1}')}$:
	
	\[
	\widetilde{P}_{\{k\}}\leftarrow g \rightarrow \widetilde{P}_{\{k\}}'\leftarrow g' \rightarrow \widetilde{P}_{\{k\}}'' \quad\Longrightarrow\quad \widetilde{P}_{\{k+1\}}\leftarrow g + g' \rightarrow \widetilde{P}_{\{k+1\}}''
	\]
	
	\item \underline{$m_{k+1}=m_{k+1}'=m_{k+1}'' = \widehat{m}_{k+1}''$:}  Yeilds 1 case where only the first gap is preserved, because $\widetilde{P}_{\{k+1\}}''$ is disallowed in $S_{k+1}^{(\widehat{m}_{k+1}'')}$:
	
	\[
	\widetilde{P}_{\{k\}}\leftarrow g \rightarrow \widetilde{P}_{\{k\}}'\leftarrow g' \rightarrow \widetilde{P}_{\{k\}}'' \quad\Longrightarrow\quad \widetilde{P}_{\{k+1\}}\leftarrow g \rightarrow \widetilde{P}_{\{k+1\}}'\leftarrow g' + g^{?} \rightarrow 
	\]
\end{enumerate}

One can see from this that if $\widehat{m}_{k}$, $\widehat{m}_{k}'$, $\widehat{m}_{k}''$ are not distinct, then case 1 would have $P_{k+1}-2$ cases if any two are equal and the third is distinct and would have $P_{k+1}-1$ cases if all three were equal.

Another important point from this example is why it is necessary to track prospective prime numbers rather than actual prime numbers. Consider the case in the above example where $\widetilde{P}_{\{j\}}=P_{\{j\}}$ and $\widetilde{P}_{\{j\}}''=P_{\{j\}}''$ are actual consecutive prime numbers. It is possible then that either one or both of $\widetilde{P}_{\{j+1\}}$ and $\widetilde{P}_{\{j+1\}}''$ may not be prime. If they are both prime it is possible that $\widetilde{P}_{\{j+1\}}'$ may also be prime. In these cases the gaps are not propagated unchanged and $\widetilde{P}_{\{j+1\}}$ and $\widetilde{P}_{\{j+1\}}''$ are not consecutive prime numbers. However, in the case of consecutive prospective prime numbers there are always predictable cases where the gaps are preserved and the prospective prime numbers remain consecutive. This is independent of whether the prospective prime numbers are prime or not. Consider, for example, the consecutive prime numbers in $S_{4}$, $113$ and $127$. While they are consecutive primes, they are not consecutive prospective primes because $121=11^2$ between them is a prospective prime in $S_{4}$, i.e., prime to $P\le 7$. Table~\ref{T: example1} shows how these three numbers propogate into $S_{5}$ along with their associated gaps.

\begin{table}[h]
\caption{
The table shows the propogation of consecutive prime numbers 113 and 127 from $S_{4}$ into $S_{5}$. The gap between prime numbers is only preserved in $S_{5}$ in cases where the intermediate prospective prime, 121, does not generate an actual prime and where the corresponding prospective primes generated by 113 and 127 are actual primes.
}\label{T: example1}
\begin{tabular}{|p{9mm}||p{6mm}|p{6mm}|p{6mm}|p{6mm}|p{6mm}|p{8mm}|p{8mm}|p{8mm}|p{8mm}|p{8mm}|p{8mm}|}
\multicolumn{12}{c}{$\widetilde{\mathbb{P}}_{5}^{(m)}=\widetilde{\mathbb{P}}_{4} +m\cdot 210 \qquad \textbf{bold} = \neg P  \qquad   \widehat{m}=\neg \widetilde{P}$}\\ 
\hline
m=	& \; 0 & \; 1 & \; 2 & \; 3 &\;  4 & \: 5 & \: 6 & \: 7 & \: 8 & \: 9 & \; 10 \\
	\hline
113	&113&\textbf{323} & 533 & 743 & 953 & 1163 & 1373 & 1583 & $\; \widehat{m}$ &  2003& 2213 \\
	\hline
$\:\; g$	&  & \;8 & \;8 & \;8 & \; 8 & \;8 & \;8 & \; 8 & & \; 8 & \: 8  \\
	\hline
121	&$\; \widehat{m}$ & 331 & 541 & 751 & \textbf{961} & 1171 & 1381 &  \textbf{1591}& 1801 & 2011 & 2221 \\
	\hline
$\:\; g'$	&  & \; 6 & \; 6 & \; 6 &\; 6  &  & \; 6&\; 6  & \; 6 & \; 6 & \; 6 \\
	\hline
127	&127  & 337 & 547 & 757 & 967 & $\; \widehat{m}$& \textbf{1387} &  1597& \textbf{1807} & 2017 & \textbf{2227} \\
	\hline
$g+g'$	& 14 &  &  &  & 14 &  &  & 14 &  &  &  \\
	\hline
\end{tabular}
\end{table}
\newpage
 The lesson here is that determining whether a prospective prime is an actual prime in a given subset is not as straightforward as predicting whether a prospective prime is present or disallowed in that subset as determined by $\widehat{m}$.

\subsection{Propagation of prospective prime pairs with gap $g$}

\begin{theorem}\label{T: noprotpp} Given set $S_{l}=\left\{N: 5 \le N \le 4+P_{l}\#\right\}$ containing a pair of consecutive prospective prime numbers, $(\widetilde{P}_{\{l\}},\widetilde{P}_{\{l\}}')$ with gap $\widetilde{P}_{\{l\}}'-\widetilde{P}_{\{l\}}=g$ and given any prime number, $P_{k} > P_{l}$, let $\mathring{n}_{k}^{g}$ be the number of prospective prime pairs $\left(\widetilde{P}_{\{k\}}, \widetilde{P}_{\{k\}}'\right)$ with gap $g$ in $S_{k}=\left\{N: 5 \le N \le 4+P_{k}\#\right\}$ that are derived from that prospective prime pair with gap $g$ in $S_{l}$, then 
	\[
	\mathring{n}_{k}^{g} = \prod_{i=l+1}^{k}(P_{i}-2)\cdot \prod_{\substack{i=l+1\\ P_{i}|g }}^{k}\frac{(P_{i}-1)}{(P_{i}-2)}
	\]
\end{theorem}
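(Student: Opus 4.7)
The plan is to prove the formula by induction on $k$, with the base case $k=l$ giving $\mathring{n}_l^g = 1$ (the empty product) and the inductive step reducing to a count of admissible values of $m_{k+1}$ in the one-step propagation $S_k \to S_{k+1}$ described by (\ref{E: nextproprime}). More precisely, I will show
\[
\mathring{n}_{k+1}^g = \begin{cases} (P_{k+1}-1)\,\mathring{n}_k^g & \text{if } P_{k+1} \mid g,\\ (P_{k+1}-2)\,\mathring{n}_k^g & \text{if } P_{k+1} \nmid g,\end{cases}
\]
and the stated product formula follows by telescoping.

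To establish the recursion, fix one derived prospective prime pair $(\widetilde{P}_{\{k\}},\widetilde{P}_{\{k\}}')$ with gap $g$ in $S_k$. Its images under (\ref{E: nextproprime}) are $\widetilde{P}_{\{k\}}+m_{k+1}P_k\#$ and $\widetilde{P}_{\{k\}}'+m_{k+1}'P_k\#$. For the gap to be preserved we must have $m_{k+1}=m_{k+1}'$, so both images land in the same subset $S_{k+1}^{(m_{k+1})}$. By (\ref{E: defmhat}), the image of $\widetilde{P}_{\{k\}}$ is excluded only when $m_{k+1}=\widehat{m}_{k+1}$, and similarly for $\widetilde{P}_{\{k\}}'$ with $\widehat{m}_{k+1}'$. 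The key observation, already recorded right after (\ref{E: defmhat}), is that $\widehat{m}_{k+1}=\widehat{m}_{k+1}'$ exactly when $\widetilde{P}_{\{k\}} \equiv \widetilde{P}_{\{k\}}' \pmod{P_{k+1}}$, which is equivalent to $P_{k+1} \mid g$. Hence the admissible common values of $m_{k+1}$ number $P_{k+1}-1$ when $P_{k+1} \mid g$, and $P_{k+1}-2$ otherwise; this gives the recursion above.

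One subtlety that must be checked is that the propagated pair remains \emph{consecutive}, not merely a pair with gap $g$, so that no unforeseen prospective prime of $S_{k+1}$ sneaks into the interval. Because the common shift $m_{k+1}P_k\#$ maps $S_k$ bijectively onto $S_{k+1}^{(m_{k+1})}$ and preserves primality to $P\le P_k$, any prospective prime of $S_{k+1}$ strictly between the two images would be of the form $N+m_{k+1}P_k\#$ with $N$ a prospective prime of $S_k$ strictly between $\widetilde{P}_{\{k\}}$ and $\widetilde{P}_{\{k\}}'$; but the consecutiveness hypothesis in $S_k$ forbids such an $N$. So consecutiveness propagates along with the gap, the recursion is justified, and the product formula follows immediately by iterating from $i=l+1$ to $i=k$. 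The main delicate point is really just this bookkeeping lemma identifying $\widehat{m}_{k+1}=\widehat{m}_{k+1}' \iff P_{k+1}\mid g$; everything else is a straightforward induction.
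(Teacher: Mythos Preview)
Your proof is correct and follows essentially the same approach as the paper: both establish the one-step recursion by counting admissible values of $m_{k+1}$ via the dichotomy $P_{k+1}\mid g$ versus $P_{k+1}\nmid g$, then iterate from $l+1$ to $k$. Your explicit verification that consecutiveness propagates under the common shift is a welcome addition; the paper handles this point only informally in the discussion preceding the theorem, and in exchange invokes Lemma~\ref{L: uniqueness} to certify that the generated pairs are distinct, a point you leave implicit.
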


\begin{proof}
	
	Given a consecutive prospective prime pair with gap $g$ in $S_{j}$, $\left(\widetilde{P}_{\{j\}}\widetilde{P}_{\{j\}}'\right)$, we can define prospective prime pairs with gap $g$ in $S_{j+1}$ by:
	
	\begin{align}\label{E: defproprima-1} 
		\widetilde{P}_{\{j+1\}}=\widetilde{P}_{\left\{j\right\}}+m_{j+1}P_{j}\#\\
		\widetilde{P}_{\{j+1\}}'=\widetilde{P}_{\left\{j\right\}}'+m_{j+1}P_{j}\#\notag
	\end{align}
	with the supplementary conditions: $0\le m_{j+1} \le P_{j+1}-1$, $m_{j+1}\ne \widehat{m}_{j+1}$ and $m_{j+1}\ne \widehat{m}_{j+1}'$ where:
	
	\begin{align}\label{E: suppconda}\notag
		\widehat{m}_{j+1}&=\frac{\alpha P_{j+1}-\widetilde{P}_{\left\{j\right\}}\bmod{P_{j+1}}}{\left(P_{j}\#\right)\bmod{P_{j+1}}}
		\\
		\\ \notag
		\widehat{m}_{j+1}'&=\frac{\alpha' P_{j+1}-\widetilde{P}_{\left\{j\right\}}'\bmod{P_{j+1}}}{\left(P_{j}\#\right)\bmod{P_{j+1}}}
	\end{align}
	
	Given $m_{j+1}<P_{j+1}-1$ one can see that both $\widetilde{P}_{\{j+1\}}$ and $\widetilde{P}_{\{j+1\}}'$ are prime to all $P\le P_{j}$. Then the other supplementary condition guarantees that $\widetilde{P}_{\{j+1\}}$ and $\widetilde{P}_{\{j+1\}}'$ are both prime to $P_{j+1}$ and therefore they are a prospective prime pair with gap $g$ in $S_{j+1}$.
	
	Given $\widetilde{P}_{\{j\}}'=\widetilde{P}_{\{j\}}+g$, (\ref{E: suppconda}) gives:
	
	\begin{align}\label{E: disalloweddiff}\notag
		\widehat{m}_{j+1}'&=\frac{\alpha' P_{j+1}-(\widetilde{P}_{\left\{j\right\}}+g)\bmod{P_{j+1}}}{\left(P_{j}\#\right)\bmod{P_{j+1}}} \\ 
		&=\widehat{m}_{j+1}
		+ \frac{\Delta\alpha \cdot P_{j+1}-g\bmod{P_{j+1}}}{\left(P_{j}\#\right)\bmod{P_{j+1}}}
	\end{align}
	where $\Delta \alpha$ is modified from $\alpha'-\alpha$ to account for separating out $g$ in the mod function, and is chosen as the least integer to make the second term an integer.
	
	Consider the case where $g\bmod{P_{j+1}}=0$, then:
	\[
	\widetilde{P}_{\{j\}}'\bmod{P_{j+1}}=(
	\widetilde{P}_{\{j\}}+g)\bmod{P_{j+1}}=\widetilde{P}_{\{j\}}\bmod{P_{j+1}}
	\]
	In that case, there is only one disallowed subset in $S_{j+1}$, so $(P_{\{j\}},P_{\{j\}}')$ generates $P_{j+1}-1$ prospective prime pairs with gap $g$ in $S_{j+1}$. If $g\bmod{P_{j+1}}\ne 0$ then
	$m_{j+1}$ has $P_{j+1}-2$ allowed values and the prime pair $(P_{\{j\}},P_{\{j\}}')$ generates $P_{j+1}-2$ distinct prospective prime pairs with gap $g$ in $S_{j+1}$. 
	
	By the same procedure, those prospective prime pairs in $S_{j+1}$ each generate prospective prime pairs with gap $g$ in $S_{j+2}$:
	
	\begin{align}\label{E: defproprima-2} 
		\widetilde{P}_{\{j+2\}}=\widetilde{P}_{\{j+1\}}+m_{j+2}P_{j+1}\#\\
		\widetilde{P}_{\{j+2\}}'=\widetilde{P}_{\{j+1\}}'+m_{j+2}P_{j+1}\#\notag
	\end{align}
	with the supplementary conditions: $0\le m_{j+2} \le P_{j+2}-1$, $m_{j+2}\ne \widehat{m}_{j+2}$ and $m_{j+2}\ne \widehat{m}_{j+2}'$ where:
	
	\begin{align}\label{E: suppconda-2}\notag
		\widehat{m}_{j+2}&=\frac{\alpha_{j+2} P_{j+2}-\widetilde{P}_{\{j+1\}}\bmod{P_{j+2}}}{\left(P_{j+1}\#\right)\bmod{P_{j+2}}}
		\\
		\\ \notag
		\widehat{m}_{j+2}'&=\frac{\alpha_{j+2}' P_{j+2}-\widetilde{P}_{\{j+1\}}'\bmod{P_{j+2}}}{\left(P_{j+1}\#\right)\bmod{P_{j+2}}}
	\end{align}
	
	Again, $\widehat{m}_{j+2}$ and $\widehat{m}_{j+2}'$ are distinct unless $g\bmod{P_{j+2}}=0$ in which case the corresponding prospective prime pair in $S_{j+1}$ generates $P_{j+2}-1$ instead of $P_{j+2}-2$ prospective primes with gap $g$ in $S_{j+2}$.
	
Furthermore we know from Lemma~\ref{L: uniqueness} that the prospective primes generated in this process are distinct so that the prime pairs are also distinct pairs.
	
Then following the same process, successively generating prospective prime pairs of gap $g$, in larger sets, e.g. going from $S_{j}$ to $S_{j+1}$, each prospective prime pair with gap $g$ in $S_{j}$ generates $P_{j+1}-1$ distinct prospective prime pairs of gap $g$ in $S_{j+1}$ if $P_{j+1}$ is a factor in $g$ and otherwise generates $P_{j+1}-2$ distinct prospective prime pairs of gap $g$ in $S_{j+1}$. 
	
	Therefore in going from $S_{l}$ to $S_{k}$ the number of prospective prime pairs with gap $g$ in $S_{k}$ that are generated from a prospective prime pair with gap $g$ in $S_{l}$ is given by $ \prod_{i=l+1}^{k}(P_{i}-2)\cdot \prod_{\substack{i=l+1\\ P_{i}|g }}^{k}\frac{(P_{i}-1)}{(P_{i}-2)}$. Therefore we have: 
	
	\[
	\mathring{n}_{k}^{g} =\prod_{i=l+1}^{k}(P_{i}-2)\cdot \prod_{\substack{i=l+1\\ P_{i}|g }}^{k}\frac{(P_{i}-1)}{(P_{i}-2)}
	\]
	
\end{proof}

Assuming there exists set $S_{l}$ that contains at least one prospective prime pair with gap $g$, if that set contains $n_{l}^{g}$ such prospective prime pairs, then  the actual number of prospective prime pairs with gap $g$ in $S_{k}$, $k>l$, derived from those $n_{l}^{g}$ prospective prime pairs is:

\begin{equation}\label{E: totalprimepairs}
	n_{k}^{g}\ge n_{l}^{g}\cdot \mathring{n}_{k}^{g}	
\end{equation}

The equal sign holds if $g=2$, because prospective twin primes can all be generated from the single twin prime $(5,7)\in S_{2}$ using (\ref{E: nextproprime}) and (\ref{E: defmhat}), giving: \cite{JS}

\[
n_{k}^{2}=\prod_{i=3}^{k}(P_{i}-2)
\] 

The formulas in Theorem~\ref{T: noprotpp} and (\ref{E: totalprimepairs}) will generally represent a minimum when considering the total prospective prime pairs with gap $g>2$ in a set. This occurs because new larger gaps are always generated in going to a larger set because of the supplemental condition (\ref{E: defmhat}).

\subsection{Distribution of Prospective Prime pairs with gap $g$}\label{S: distribution}.

We define $(\widetilde{PgP})_{j}=\left( \widetilde{P}_{\{j\}}, \widetilde{P}_{\{j\}}' \right)$ as a generic prospective prime pair with gap $g$ in $S_{j}$

In the following Lemmas we assume there exists a set $S_{l}$ with at least one prospective prime pair with gap $g$. In the Lemmas, the indices $j$ and $k$ are assumed to have values $>l+2$.

\begin{lemma}\label{L: distrofSj+1}
	The set of $(\widetilde{PgP})_{j+1}\in S_{j+1}$ generated from a single $(\widetilde{PgP})_{j}\in S_{j}$ has each $(\widetilde{PgP})_{j+1}$ distributed to a distinct subset of $S_{j+1}$. Furthermore, if $g\bmod{P_{j+1}}=0$ they are distributed one each to all but one subset of $S_{j+1}$ and if $g\bmod{P_{j+1}}\ne 0$ they are distributed one each to all but two subsets of $S_{j+1}$.
\end{lemma}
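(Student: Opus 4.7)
The plan is to extract the distribution claim directly from the bookkeeping already carried out in Theorem~\ref{T: noprotpp}, coupled with equation~(\ref{E: proprimeinsub}). The construction (\ref{E: defproprima-1}) produces each $(\widetilde{PgP})_{j+1}$ from the fixed pair $(\widetilde{PgP})_{j}$ by selecting a single value of $m_{j+1}\in\{0,\dots,P_{j+1}-1\}$ and adding $m_{j+1}P_{j}\#$ to both coordinates. By (\ref{E: proprimeinsub}), the smaller coordinate $\widetilde{P}_{\{j+1\}}=\widetilde{P}_{\{j\}}+m_{j+1}P_{j}\#$ lies in $S_{j+1}^{(m_{j+1})}$, so the subset index of the generated pair is exactly $m_{j+1}$. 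Since distinct $m_{j+1}$ give disjoint subsets, the first assertion — that each $(\widetilde{PgP})_{j+1}$ lands in a distinct subset of $S_{j+1}$ — follows at once.

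For the counting assertion I would appeal to the two supplementary conditions $m_{j+1}\ne\widehat{m}_{j+1}$ and $m_{j+1}\ne\widehat{m}_{j+1}'$ imposed in Theorem~\ref{T: noprotpp} in order that both coordinates remain prime to $P_{j+1}$. Equation~(\ref{E: disalloweddiff}) gives
\[
\widehat{m}_{j+1}'-\widehat{m}_{j+1}=\frac{\Delta\alpha\cdot P_{j+1}-g\bmod P_{j+1}}{(P_{j}\#)\bmod P_{j+1}},
\]
so $\widehat{m}_{j+1}=\widehat{m}_{j+1}'$ precisely when $g\bmod P_{j+1}=0$. Thus when $P_{j+1}\mid g$ there is exactly one forbidden subset of $S_{j+1}$ and $P_{j+1}-1$ allowed ones, while when $P_{j+1}\nmid g$ the two forbidden indices are distinct and $P_{j+1}-2$ allowed subsets remain. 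Combined with the first paragraph, each allowed $m_{j+1}$ places exactly one pair into $S_{j+1}^{(m_{j+1})}$, producing the distribution described in the lemma.

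The only subtlety I would pause over is that $\widehat{m}_{j+1}$ and $\widehat{m}_{j+1}'$ are defined as the unique elements of $\{0,\dots,P_{j+1}-1\}$ satisfying the divisibility requirement, so the equality $\widehat{m}_{j+1}=\widehat{m}_{j+1}'$ must be read modulo $P_{j+1}$. I would therefore verify briefly that the right-hand side of (\ref{E: disalloweddiff}), reduced modulo $P_{j+1}$, vanishes if and only if $P_{j+1}\mid g$, using that $(P_{j}\#)\bmod P_{j+1}$ is a nonzero residue and hence invertible mod $P_{j+1}$. Apart from this routine modular check, the lemma is a direct reading of results already in place, and I do not anticipate any serious obstacle.
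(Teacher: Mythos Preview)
Your proposal is correct and follows essentially the same approach as the paper's own proof: both identify the generated pair with the index $m_{j+1}$ via the relation $(\widetilde{PgP})_{j+1}=(\widetilde{PgP})_{j}+m_{j+1}P_{j}\#\in S_{j+1}^{(m_{j+1})}$, then invoke the two supplementary conditions $m_{j+1}\ne\widehat{m}_{j+1},\widehat{m}_{j+1}'$ and check that these coincide exactly when $P_{j+1}\mid g$. The only cosmetic difference is that you cite the formulas already established in Theorem~\ref{T: noprotpp} (in particular (\ref{E: disalloweddiff})) and (\ref{E: proprimeinsub}), whereas the paper restates them inside the lemma; your modular remark about the invertibility of $(P_{j}\#)\bmod P_{j+1}$ is a harmless clarification of a point the paper leaves implicit.
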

\begin{proof}
	Let $\widetilde{(PgP)}_{j+1}=\left( \widetilde{P}_{\{j+1\}}, \widetilde{P}_{\{j+1\}}' \right)$ be a prospective prime pair with gap $g$ in $S_{j+1}$ generated from $(\widetilde{PgP})_{j}$, where:
	
	\begin{equation}\label{E: PgP 1}
		(\widetilde{PgP})_{j+1}=(\widetilde{PgP})_{j}+m_{j+1}P_{j}\#
	\end{equation}
	This actually represents separate equations relating $\widetilde{P}_{\{j+1\}}$ to $\widetilde{P}_{\{j\}}$ and $\widetilde{P}_{\{j+1\}}'$ to $\widetilde{P}_{\{j\}}'$ both using the same value of $m_{j+1}$, where:
	
	\[
	0 \le m_{j+1} \le P_{j+1}-1 
	\]
	and where additionally:
	
	\begin{align}\label{E: PgPms}\notag
		m_{j+1}\ne& \widehat{m}_{j+1} =\frac{\alpha_{j+1} P_{j+1}-\widetilde{P}_{\{j\}}\bmod{P_{j+1}}}{\left(P_{j}\#\right)\bmod{P_{j+1}}}\\ 
		\textrm{and}&\\ \notag
		m_{j+1}\ne& \widehat{m}_{j+1}'= \frac{\alpha_{j+1}' P_{j+1}-\widetilde{P}_{\{j\}}'\bmod{P_{j+1}}}{\left(P_{j}\#\right)\bmod{P_{j+1}}}
	\end{align}
	where $\alpha_{j+1}$ and $\alpha_{j+1}'$ represent the lowest integer values yielding integer solutions for $\widehat{m}_{j+1}$ and $\widehat{m}_{j+1}'$. 
	
	Given subsets of $S_{j+1}$:
	
	\begin{equation}\label{E: subsetdef}
		S_{j+1}^{(m)}=\left\{N: 5+mP_{j}\#\le N\le 4+(m+1)P_{j}\#\right\}	
	\end{equation}
	one can see that:
	
	\begin{equation}\label{E: PgP 2}
		(\widetilde{PgP})_{j+1}=(\widetilde{PgP})_{j}+m_{j+1}P_{j}\#\in S_{j+1}^{(m_{j+1})}
	\end{equation}
	where $0\le m_{j+1}\le P_{j+1}-1$.

	Therefore a fixed $(\widetilde{PgP})_{j}\in S_{j}$ generates one prospective prime pair with gap $g$ into each allowed subset of $S_{j+1}$. The disallowed subsets of $S_{j+1}$ are given by  (\ref{E: PgPms}) and are $S_{j+1}^{(\widehat{m}_{j+1})}$ and $S_{j+1}^{(\widehat{m}_{j+1}')}$. These will be the same single disallowed subset if $g\bmod{P_{j+1}}=0$, because then
	\[\widetilde{P}_{\{j\}}'\bmod{P_{j+1}}=\left(\widetilde{P}_{\{j\}}+g\right)\bmod{P_{j+1}}=\widetilde{P}_{\{j\}}\bmod{P_{j+1}}
	\]
	. Therefore, each $(\widetilde{PgP})_{j}\in S_{j}$ generates one  corresponding $(\widetilde{PgP})_{j+1}$ into all but one or two of the $P_{j+1}$ subsets of $S_{j+1}$  respectively, depending on whether $g\bmod{P_{j+1}}=0$ or not.
\end{proof}

\begin{lemma}\label{L: distroinSj+2}
	Given the set of $(\widetilde{PgP})_{j+2}\in S_{j+2}$  generated by a single  $(\widetilde{PgP})_{j}\in S_{j}$, then the disallowed subsets $S_{j+2}^{(\widehat{m})}$ corresponding to the two comoponents of each $(\widetilde{PgP})_{j+2}$ are separately distinct. 
\end{lemma}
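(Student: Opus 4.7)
The plan is to reduce the claim to a statement about residues modulo $P_{j+2}$, because the formula for $\widehat{m}_{j+2}$ in (\ref{E: suppconda-2}) shows that the disallowed subset of $S_{j+2}$ attached to a component depends only on that component's residue modulo $P_{j+2}$. So I would reformulate the lemma as: across the distinct $(\widetilde{PgP})_{j+1}$ generated from $(\widetilde{PgP})_{j}$ via Lemma \ref{L: distrofSj+1}, the residues $\widetilde{P}_{\{j+1\}} \bmod P_{j+2}$ are all different, and the residues $\widetilde{P}_{\{j+1\}}' \bmod P_{j+2}$ are all different. Since every $(\widetilde{PgP})_{j+2}$ inherits its parent pair's disallowed subsets of $S_{j+2}$, this will give the conclusion.

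First I would fix notation: enumerate the parent pairs as $(\widetilde{PgP})_{j+1}^{(s)} = (\widetilde{PgP})_{j} + m_{j+1}^{(s)} P_{j}\#$ with distinct $m_{j+1}^{(s)} \in \{0,\dots,P_{j+1}-1\}$, the distinctness being guaranteed by Lemma \ref{L: distrofSj+1}, which places each parent in its own subset of $S_{j+1}$. Then for the first component I would compute
\[
\widetilde{P}_{\{j+1\}}^{(s)} - \widetilde{P}_{\{j+1\}}^{(t)} \;=\; \bigl(m_{j+1}^{(s)} - m_{j+1}^{(t)}\bigr)\,P_{j}\#,
\]
and observe that any supposed congruence modulo $P_{j+2}$ would require $P_{j+2}$ to divide this difference. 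Because $P_{j+2}$ is prime and strictly larger than every prime factor of $P_{j}\#$, we have $\gcd(P_{j}\#,P_{j+2}) = 1$, so $P_{j+2}$ would need to divide $m_{j+1}^{(s)}-m_{j+1}^{(t)}$. But $|m_{j+1}^{(s)}-m_{j+1}^{(t)}| \le P_{j+1}-1 < P_{j+2}$, forcing $m_{j+1}^{(s)} = m_{j+1}^{(t)}$, which contradicts distinctness of the parents.

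The same argument applies verbatim to the second component, since $\widetilde{P}_{\{j+1\}}'^{(s)} - \widetilde{P}_{\{j+1\}}'^{(t)}$ is again $(m_{j+1}^{(s)}-m_{j+1}^{(t)})\,P_{j}\#$. Feeding these distinct residues back into (\ref{E: suppconda-2}) yields distinct values of $\widehat{m}_{j+2}$ across parents, and separately distinct values of $\widehat{m}_{j+2}'$ across parents; because every child $(\widetilde{PgP})_{j+2}$ acquires the disallowed subsets of its unique parent, the two families of disallowed subsets attached to first and second components are each free of repeats.

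The only real obstacle is conceptual rather than computational: pinning down the statement so that "disallowed subsets corresponding to the two components" is read as the two $\widehat{m}$ indices in $S_{j+2}$ furnished by the parent in $S_{j+1}$, and keeping straight that "separately distinct" means distinctness within the first-component family and within the second-component family, not distinctness between them (which would fail when $P_{j+2} \mid g$). Once the statement is parsed this way, the proof is essentially the observation that the map $m_{j+1} \mapsto m_{j+1}\,P_{j}\# \bmod P_{j+2}$ is injective on $\{0,\dots,P_{j+1}-1\}$, together with the fact that a constant shift by $\widetilde{P}_{\{j\}}$ (or $\widetilde{P}_{\{j\}}'$) preserves injectivity.
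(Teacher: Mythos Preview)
Your proposal is correct and follows essentially the same route as the paper: both arguments express the disallowed indices $\widehat{m}_{j+2}$, $\widehat{m}_{j+2}'$ via (\ref{E: suppconda-2}), substitute $\widetilde{P}_{\{j+1\}}=\widetilde{P}_{\{j\}}+m_{j+1}P_{j}\#$, and conclude that distinct values of $m_{j+1}$ (guaranteed by Lemma~\ref{L: distrofSj+1}) yield separately distinct $\widehat{m}_{j+2}$ and $\widehat{m}_{j+2}'$. Your version is in fact slightly more explicit at the key step, spelling out the injectivity of $m_{j+1}\mapsto m_{j+1}P_{j}\#\bmod P_{j+2}$ via $\gcd(P_{j}\#,P_{j+2})=1$ and the bound $|m_{j+1}^{(s)}-m_{j+1}^{(t)}|<P_{j+2}$, whereas the paper leaves this implicit in the formula.
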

\begin{proof}
	Consider the set of $(\widetilde{PgP})_{j+1}\in S_{j+1}$ generated from the same  $(\widetilde{PgP})_{j}\in S_{j}$, which we represent as: $\left\{(\widetilde{PgP})_{j+1}\right\}_{(\widetilde{PgP})_{j}}$.  The $(\widetilde{PgP})_{j+1} \in \left\{(\widetilde{PgP})_{j+1}\right\}_{(\widetilde{PgP})_{j}}$ are distributed in $S_{j+1}$ as given by Lemma~\ref{L: distrofSj+1}, one each to all but one or two subsets of $S_{j+1}$.
	
	Now consider the set of $(\widetilde{PgP})_{j+2}$ generated by the set of $\left\{(\widetilde{PgP})_{j+1}\right\}_{(\widetilde{PgP})_{j}}$. We represent this set as:
	
	\begin{equation}\label{E: PGPinJ+2}
		\left\{(\widetilde{PgP})_{j+2}\right\}_{(\widetilde{PgP})_{j}}= \left\{(\widetilde{PgP})_{j+1}\right\}_{(\widetilde{PgP})_{j}}+ m_{j+2}P_{j+1}\#
	\end{equation}
	where we consider that the second term on the right is added to both components of each member of the set represented as the first term on the right. We have supplementary conditions:
	
	\[
	0\le m_{j+2} \le P_{j+2}-1 \le \quad \textrm{and}\quad m_{j+2}\ne \widehat{m}_{j+2}, \widehat{m}_{j+2}'
	\]
	where, given $(\widetilde{PgP})_{j+1}=( \widetilde{P}_{\{j+1\}}, \widetilde{P}_{\{j+1\}}')$:
	
	\begin{align}\label{E: suppconda-2b}\notag
		\widehat{m}_{j+2}&=\frac{\alpha_{j+2} P_{j+2}-\widetilde{P}_{\{j+1\}}\bmod{P_{j+2}}}{\left(P_{j+1}\#\right)\bmod{P_{j+2}}}
		\\
		\\ \notag
		\widehat{m}_{j+2}'&=\frac{\alpha_{j+2}' P_{j+2}-\widetilde{P}_{\{j+1\}}'\bmod{P_{j+2}}}{\left(P_{j+1}\#\right)\bmod{P_{j+2}}}
	\end{align}
	
	These represent two distinct disallowed subsets in $S_{j+2}$ unless $g\bmod{P_{j+2}}=0$ in which case there is only one disallowed subset.
	
	By definition, each $\widetilde{P}_{j+1}\in \left\{(\widetilde{PgP})_{j+1}\right\}_{(\widetilde{PgP})_{j}}$ is generated using the same $(\widetilde{PgP})_{j}$.  Therefore, from Equations~(\ref{E: suppconda-2b}) we have:
	
	\begin{align}\label{E: suppconda-2c}\notag
		\widehat{m}_{j+2}&=\frac{\beta_{j+2} P_{j+2}-\widetilde{P}_{\{j\}}-m_{j+1}\left(P_{j}\#\right)\bmod{P_{j+2}}}{\left(P_{j+1}\#\right)\bmod{P_{j+2}}}
		\\
		\\ \notag
		\widehat{m}_{j+2}'&=\frac{\beta_{j+2}' P_{j+2}-\widetilde{P}_{\{j\}}-g\bmod{P_{j+2}}-m_{j+1}\left(P_{j}\#\right)\bmod{P_{j+2}}}{\left(P_{j+1}\#\right)\bmod{P_{j+2}}}
	\end{align}
	
	Where we use $\beta$ instead of $\alpha$ to represent possible changes to the integer values given the breakout of the mod arguments. However they still are the lowest integer values making  $\widehat{m}_{j+2}$ and $\widehat{m}_{j+2}'$  integers.
	
	One can see that for a given $\widetilde{P}_{\{j\}}$ and fixed $P_{j+2}$ the only variable in each of the equations in (\ref{E: suppconda-2c}) is $m_{j+1}$. According to Lemma~\ref{L: distrofSj+1} each $(\widetilde{PgP})_{j+1}\in \left\{(\widetilde{PgP})_{j+1}\right\}_{(\widetilde{PgP})_{j}}$ has a unique corresponding value of $m_{j+1}$, and therefore the values of $\widehat{m}_{j+2}$ and $\widehat{m}_{j+2}'$ are separately distinct corresponding to the values of $m_{j+1}$.
	Therefore the disallowed subsets for each component of 
	\[(\widetilde{PgP})_{j+2} \in \left\{(\widetilde{PgP})_{j+2}\right\}_{(\widetilde{PgP})_{j}}
	\] 
	namely $S_{j+2}^{(\widehat{m}_{j+2})}$ and $S_{j+2}^{(\widehat{m}_{j+2}')}$ are separately distinct.
\end{proof}

\begin{lemma}\label{L: delamhat}
	The separation of disallowed subsets corresponding to the two components of each $(\widetilde{PgP})_{k}\in S_{k}$ is a constant in $S_{k}$.	
\end{lemma}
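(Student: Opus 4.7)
The plan is to compute the difference $\widehat{m}_{k}' - \widehat{m}_{k}$ directly from (\ref{E: defmhat}) and verify that it depends only on $g$, $P_{k-1}\#$, and $P_{k}$ — none of which vary across different pairs $(\widetilde{PgP})_{k} \in S_{k}$. By Lemma~\ref{L: distroinSj+2} the two values $\widehat{m}_{k}$ and $\widehat{m}_{k}'$ are well-defined indices in $\{0, \ldots, P_{k}-1\}$ associated to each $(\widetilde{PgP})_{k}$ via its parent pair in $S_{k-1}$, so constancy of their difference modulo $P_{k}$ is precisely the content of the assertion.

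The key step is a short modular computation. For the parent pair $(\widetilde{P}_{\{k-1\}}, \widetilde{P}_{\{k-1\}}') \in S_{k-1}$ with $\widetilde{P}_{\{k-1\}}' = \widetilde{P}_{\{k-1\}} + g$, formula (\ref{E: defmhat}) yields
\[
(P_{k-1}\#)\,\widehat{m}_{k} \equiv -\widetilde{P}_{\{k-1\}} \pmod{P_{k}},
\qquad
(P_{k-1}\#)\,\widehat{m}_{k}' \equiv -\widetilde{P}_{\{k-1\}} - g \pmod{P_{k}}.
\]
Subtracting these two congruences eliminates $\widetilde{P}_{\{k-1\}}$, and since $\gcd(P_{k-1}\#,\, P_{k}) = 1$ the quantity $P_{k-1}\#$ is invertible modulo $P_{k}$, giving
\[
\widehat{m}_{k}' - \widehat{m}_{k} \equiv -g\,(P_{k-1}\#)^{-1} \pmod{P_{k}}.
\]
The right-hand side contains no reference to the particular parent $\widetilde{P}_{\{k-1\}}$, so this separation is the same residue for every $(\widetilde{PgP})_{k} \in S_{k}$, establishing the lemma. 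As a sanity check, if $g \equiv 0 \pmod{P_{k}}$ the separation vanishes and the two disallowed subsets collapse to one, matching the single-disallowed-subset case in Lemma~\ref{L: distrofSj+1}.

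The only point requiring care is interpretational rather than computational: both $\widehat{m}_{k}$ and $\widehat{m}_{k}'$ are chosen as representatives in $\{0, 1, \ldots, P_{k}-1\}$, so the raw signed integer difference $\widehat{m}_{k}' - \widehat{m}_{k}$ may jump by $\pm P_{k}$ from one pair to another. What is genuinely invariant — and what ``separation'' must be taken to mean for the lemma to be correct — is the residue class modulo $P_{k}$, i.e.\ the cyclic distance between the two disallowed indices. I would make this reading explicit in the statement. Beyond that point, no deeper structural input is needed; the result is essentially the observation that, modulo any prime coprime to $P_{k-1}\#$, translation by $g$ shifts the associated disallowed index by a fixed amount determined entirely by $g$.
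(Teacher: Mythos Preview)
Your proof is correct and follows essentially the same route as the paper: both subtract the two defining expressions for $\widehat{m}_k$ and $\widehat{m}_k'$ after substituting $\widetilde{P}_{\{k-1\}}'=\widetilde{P}_{\{k-1\}}+g$, obtaining a difference that depends only on $g$, $P_{k-1}\#$, and $P_k$. You phrase the computation via the modular inverse $(P_{k-1}\#)^{-1}\bmod P_k$, whereas the paper writes the same quantity in its $\Delta\alpha$ fractional notation, but the argument is identical in content.
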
	
\begin{proof}
	Using (\ref{E: suppconda-2b}) with $k=j+2$	and $\widetilde{P}_{\{k-1\}}'=\widetilde{P}_{\{k-1\}}+g$ gives:
	
	\begin{equation}\label{E: fixeddelta}
		\Delta \widehat{m}_{k}=\frac{\Delta\alpha P_{k}-g\bmod{P_{k}}}{\left(P_{k-1}\#\right)\bmod{P_{k}}}
	\end{equation}
\end{proof}
Where all quantities on the right hand side of (\ref{E: fixeddelta}) are fixed given $S_{k}$.

\begin{lemma}\label{L: distrofPgPinSj_2}
	Given the set	$\left\{(\widetilde{PgP})_{j+2}\right\}_{(\widetilde{PgP})_{j}}$  of $(\widetilde{PgP})_{j+2}\in S_{j+2}$ generated by a single $(\widetilde{PgP})_{j}\in S_{j}$, each subset $S_{j+2}^{(m)}$ contains a minimum of  $P_{j+1}-4$ of the $(\widetilde{PgP})_{j+2}\in \left\{(\widetilde{PgP})_{j+2}\right\}_{(\widetilde{PgP})_{j}}$.
\end{lemma}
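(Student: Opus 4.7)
The plan is to combine Lemmas~\ref{L: distrofSj+1} and~\ref{L: distroinSj+2} in a straightforward counting argument on a fixed target subset. By Lemma~\ref{L: distrofSj+1}, the family $\left\{(\widetilde{PgP})_{j+1}\right\}_{(\widetilde{PgP})_{j}}$ contains at least $P_{j+1}-2$ prospective prime pairs (the lower bound holding when $g\bmod P_{j+1}\ne 0$; the other case only improves the count). Each such parent pair generates, via (\ref{E: defproprima-2}), one $(\widetilde{PgP})_{j+2}$ for every allowed $m_{j+2}\in\{0,\dots,P_{j+2}-1\}$, and (\ref{E: proprimeinsub}) places that offspring in $S_{j+2}^{(m_{j+2})}$. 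Hence, fixing an arbitrary subset $S_{j+2}^{(m)}$, each parent contributes an offspring to it unless $m=\widehat{m}_{j+2}$ or $m=\widehat{m}'_{j+2}$ for that particular parent.

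First I would reduce the problem to bounding the number of parents for which $S_{j+2}^{(m)}$ is disallowed. Applying Lemma~\ref{L: distroinSj+2}, as the parent ranges over the family the values $\widehat{m}_{j+2}$ are pairwise distinct and, separately, the values $\widehat{m}'_{j+2}$ are pairwise distinct. Therefore at most one parent can have $\widehat{m}_{j+2}=m$ and at most one parent can have $\widehat{m}'_{j+2}=m$, so at most two parents (one, if these two coincidences happen to occur in the same parent) are blocked from contributing an offspring to $S_{j+2}^{(m)}$.

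Combining the two counts, the number of pairs in $\left\{(\widetilde{PgP})_{j+2}\right\}_{(\widetilde{PgP})_{j}}$ that land in $S_{j+2}^{(m)}$ is at least $(P_{j+1}-2)-2=P_{j+1}-4$, which is the claimed bound, and this holds for every $m$ since $m$ was arbitrary. The main subtlety to spell out carefully is that Lemma~\ref{L: distroinSj+2}'s ``separately distinct'' assertion treats the first-component and second-component disallowed indices as two independent injective maps from the parent family into $\{0,\dots,P_{j+2}-1\}$; this is what caps the number of disqualified parents at two rather than allowing linear growth with the family size. Once this is articulated, the remainder of the argument is a single subtraction, so no delicate estimate is required.
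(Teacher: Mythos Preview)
Your argument is correct and follows essentially the same route as the paper: both proofs combine Lemma~\ref{L: distrofSj+1} (giving at least $P_{j+1}-2$ parents) with Lemma~\ref{L: distroinSj+2} (separate injectivity of $\widehat{m}_{j+2}$ and $\widehat{m}'_{j+2}$ across parents) to conclude that a fixed target subset can be disallowed for at most two parents, yielding $P_{j+1}-4$. Your framing of Lemma~\ref{L: distroinSj+2} as two injective maps from the parent family into $\{0,\dots,P_{j+2}-1\}$ is exactly the mechanism the paper uses, only stated a bit more crisply.
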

\begin{proof}
	Restating (\ref{E: PGPinJ+2}):
	\[
	\left\{(\widetilde{PgP})_{j+2}\right\}_{(\widetilde{PgP})_{j}}= \left\{(\widetilde{PgP})_{j+1}\right\}_{(\widetilde{PgP})_{j}}+ m_{j+2}P_{j+1}\#
	\]
	Lemma~\ref{L: distrofSj+1} gives that the 
	$(\widetilde{PgP})_{j+1}\in 
	\left\{(\widetilde{PgP})_{j+1}\right\}_{(\widetilde{PgP})_{j}}
	$
	are distributed one to a subset across all but one or two subsets of $S_{j+1}$. That means there are at least $P_{j+1}-2$ distinct $(\widetilde{PgP})_{j+1}\in \left\{(\widetilde{PgP})_{j+1}\right\}_{(\widetilde{PgP})_{j}}$.
	
	Applying Lemma~\ref{L: distrofSj+1} individually to each $(\widetilde{PgP})_{j+1}$ says that the corresponding $(\widetilde{PgP})_{j+2}$ are distributed one per subset across all but one or two subsets of $S_{j+2}$. This is true for each of the $P_{k+1}-2$ instances of $(\widetilde{PgP})_{j+1}$.
	
	Lemma~\ref{L: distroinSj+2} says that the disallowed subsets of $S_{j+2}$ are separately distinct for the lesser and greater components of the resulting $(\widetilde{PgP})_{j+2}$. Therefore none of the $(\widetilde{PgP})_{j+2}$ have the same disallowed subset corresponding to their lesser components and the same for their greater components. 
	
	It is possible however for the disallowed subsets of $S_{j+2}$ to be the same for the opposite components of two $(\widetilde{PgP})_{j+2} \in \left\{(\widetilde{PgP})_{j+2}\right\}_{(\widetilde{PgP})_{j}}$. This can occur when:
	
	\[
	(\widetilde{PgP})_{j+1}=(\widetilde{PgP})_{j+1}' \pm (nP_{j+2} + g)
	\]
	This can only occur if $g\bmod{P_{j+2}}\ne 0$; i.e., where the corresponding $(\widetilde{PgP})_{j+1}$ has two disallowed subsets when generating prospective prime pairs in $S_{j+2}$. 
	
	This means that a subset of $S_{j+2}$ can have at most two exclusions of $(\widetilde{PgP})_{j+2}$ and therefore there are at least $P_{j+1}-4$ of the $(\widetilde{PgP})_{j+2}$
	in each subset of $S_{j+2}$
\end{proof}

With these results we have the following theorem.

\begin{theorem}\label{T: distroPgP}
	Given the set $S_{l}=\left\{N: 5 \le N \le 4+P_{l}\#  \right\}$  containing at least one prospective prime pair with gap $g$. Then for $k>l+2$, consider the set 
	$S_{k}=\left\{N: 5 \le N \le 4+P_{k}\#\right\}$ with its $P_{k}$  subsets: 
	\[S_{k}^{(m)}=\left\{N: 5+mP_{k-1}\# \le N \le 4+(m+1)P_{k-1}\#\right\}
	\]
	$0 \le m \le P_{k}-1$.  
	Then if $\mathring{n}_{S_{k}^{(m)}}^{g}$ is the number of prospective prime pairs with gap $g$  in each subset $S_{k}^{(m)}\in S_{k}$ generated from a prospective prime pair with gap $g$ in $S_{l}$, then:
	\[
	\mathring{n}_{S_{k}^{(m)}}^{g}\ge \mathring{n}_{k-2}^{g}(P_{k-1}-4) = (P_{k-1}-4)\prod_{i=l+1}^{k-2}(P_{i}-2)\cdot \prod_{\substack{i=l+1\\ P_{i}|g }}^{k-2}\frac{(P_{i}-1)}{(P_{i}-2)}
	\]	
\end{theorem}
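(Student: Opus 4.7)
The plan is to combine the total count in $S_{k-2}$ from Theorem~\ref{T: noprotpp} with the per-subset distribution in Lemma~\ref{L: distrofPgPinSj_2}, then use Lemma~\ref{L: uniqueness} to forbid collisions between descendant fibers of different ancestor pairs. The argument breaks into three logical stages: count the ancestors living in $S_{k-2}$, apply the two-step distribution lemma separately to each of them, and finally sum disjointly over ancestors.

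For the first stage I would apply Theorem~\ref{T: noprotpp} with its target set $S_k$ replaced by $S_{k-2}$. The single given pair $(\widetilde{PgP})_l\in S_l$ then generates exactly
\[
\mathring{n}_{k-2}^{g}=\prod_{i=l+1}^{k-2}(P_i-2)\cdot\prod_{\substack{i=l+1\\ P_i\mid g}}^{k-2}\frac{P_i-1}{P_i-2}
\]
consecutive prospective prime pairs of gap $g$ in $S_{k-2}$. In the second stage I would apply Lemma~\ref{L: distrofPgPinSj_2} with $j=k-2$ to each of these ancestor pairs individually; since $j+1=k-1$ and $j+2=k$, the lemma yields that every subset $S_k^{(m)}$ contains at least $P_{k-1}-4$ members of the descendant fiber $\bigl\{(\widetilde{PgP})_{k}\bigr\}_{(\widetilde{PgP})_{k-2}}$ of that particular ancestor, independently of $m$.

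The third stage adds these per-ancestor contributions and verifies that no pair in $S_k^{(m)}$ is counted twice. Here I would invoke Lemma~\ref{L: uniqueness}: every $\widetilde{P}_{\{k\}}$ has a unique $(m_3,\dots,m_k)$ expansion, so two descendants of distinct ancestors in $S_{k-2}$ already differ in their $(m_3,\dots,m_{k-2})$-prefix and therefore differ in $S_k$. Hence fibers over distinct ancestors are pairwise disjoint inside every $S_k^{(m)}$, and summing the $P_{k-1}-4$ lower bound across all $\mathring{n}_{k-2}^{g}$ ancestors gives
\[
\mathring{n}_{S_k^{(m)}}^{g}\;\ge\;\mathring{n}_{k-2}^{g}\,(P_{k-1}-4),
\]
which is exactly the bound in the statement once $\mathring{n}_{k-2}^{g}$ is written out.

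The main obstacle is the disjointness step: without it, one only gets a sum of per-ancestor lower bounds rather than the product $\mathring{n}_{k-2}^{g}\,(P_{k-1}-4)$, and the multiplicative structure on which later applications rely would collapse. Lemma~\ref{L: uniqueness} is what rescues the product form, and everything else is a matter of aligning index ranges (the hypothesis $k>l+2$ ensures $\mathring{n}_{k-2}^{g}$ is well-defined and that Lemma~\ref{L: distrofPgPinSj_2} is applicable at $j=k-2$) and reading off the final formula.
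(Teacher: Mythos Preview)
Your proposal is correct and follows essentially the same approach as the paper: count the $\mathring{n}_{k-2}^{g}$ ancestor pairs in $S_{k-2}$ via Theorem~\ref{T: noprotpp}, apply Lemma~\ref{L: distrofPgPinSj_2} with $j=k-2$ to get at least $P_{k-1}-4$ descendants per ancestor in every $S_k^{(m)}$, and multiply. The only difference is that you explicitly justify the disjointness of descendant fibers via Lemma~\ref{L: uniqueness}, whereas the paper simply multiplies without comment; your extra care here is warranted and makes the argument tighter.
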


\begin{proof}
	Given Lemma~\ref{L: distrofPgPinSj_2} we know that for each $(\widetilde{PgP})_{k-2} \in S_{k-2}$ we have a minimum of $P_{k-1}-4$ prospective prime pairs with gap $g$ in each of the subsets $S_{k}^{(m)}$. Then using Theorem~\ref{T: noprotpp} we know there are $\mathring{n}_{k-2}^{g}=\prod_{i=l+1}^{k-2}(P_{i}-2)\cdot \prod_{\substack{i=l+1\\ P_{i}|g }}^{k-2}\frac{(P_{i}-1)}{(P_{i}-2)}$ prospective prime pairs in $S_{k-2}$ 
	
	Putting these two results together we get:
	
	\begin{align}\label{E: primespairsinsub}\notag
		\mathring{n}_{S_{k}^{(m)}}^{g} &\ge n_{k-2}^{g} \cdot (P_{k-1}-4) \\
		&= (P_{k-1}-4) \prod_{i=l+1}^{k-2}(P_{i}-2)\cdot \prod_{\substack{i=l+1\\ P_{i}|g }}^{k-2}\frac{(P_{i}-1)}{(P_{i}-2)}  \\ \notag
	\end{align}	
\end{proof}

\begin{corollary}
	For sufficiently large $P_{k}$:
	\[
	\mathring{n}_{S_{k}^{(m)}}^{g}\ge\mathring{n}_{k-1}^{g}-2\mathring{n}_{k-2}^{g}
	\]	
\end{corollary}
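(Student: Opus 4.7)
The plan is to express $\mathring{n}_{k-1}^g$ as a scalar multiple of $\mathring{n}_{k-2}^g$ using the product formula from Theorem~\ref{T: noprotpp}, then compare $\mathring{n}_{k-1}^g - 2\mathring{n}_{k-2}^g$ to the lower bound on $\mathring{n}_{S_k^{(m)}}^g$ provided by Theorem~\ref{T: distroPgP}. Splitting on whether $P_{k-1}$ divides $g$ gives two cases, and in each the inequality reduces to a simple arithmetic comparison between integer coefficients of $\mathring{n}_{k-2}^g$.

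First I would observe that by Theorem~\ref{T: noprotpp}, the ratio $\mathring{n}_{k-1}^g/\mathring{n}_{k-2}^g$ equals the single missing factor corresponding to $i=k-1$, namely $P_{k-1}-2$ when $P_{k-1}\nmid g$ and $P_{k-1}-1$ when $P_{k-1}\mid g$. In Case~A ($P_{k-1}\nmid g$) this yields
\[
\mathring{n}_{k-1}^g - 2\mathring{n}_{k-2}^g = (P_{k-1}-4)\,\mathring{n}_{k-2}^g,
\]
which is exactly the lower bound delivered by Theorem~\ref{T: distroPgP}, so the corollary follows immediately.

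In Case~B ($P_{k-1}\mid g$) we instead get $\mathring{n}_{k-1}^g - 2\mathring{n}_{k-2}^g = (P_{k-1}-3)\,\mathring{n}_{k-2}^g$, which is one unit of $\mathring{n}_{k-2}^g$ larger than the $(P_{k-1}-4)$ factor recorded in the statement of Theorem~\ref{T: distroPgP}. Here the key step is to revisit the proof of Lemma~\ref{L: distrofPgPinSj_2}: when $g\bmod P_{k-1}=0$, each $(\widetilde{PgP})_{k-2}$ already generates $P_{k-1}-1$ distinct $(\widetilde{PgP})_{k-1}$ rather than $P_{k-1}-2$ (since the two disallowed residues $\widehat{m}_{k-1}$ and $\widehat{m}_{k-1}'$ coincide). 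Applying Lemma~\ref{L: distroinSj+2} to these $P_{k-1}-1$ pairs still excludes at most two of them from any fixed subset $S_k^{(m)}$, giving a refined lower bound of $(P_{k-1}-1)-2=P_{k-1}-3$ per originating pair in $S_{k-2}$. Multiplying by $\mathring{n}_{k-2}^g$ recovers exactly $\mathring{n}_{k-1}^g - 2\mathring{n}_{k-2}^g$, closing this case.

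The main obstacle, and the only nonroutine point, is this case-B refinement: the uniform bound quoted in Theorem~\ref{T: distroPgP} is not quite strong enough on its own, and one must reach back into the case analysis underlying Lemma~\ref{L: distrofPgPinSj_2} to recover the extra factor. The hypothesis ``for sufficiently large $P_k$'' is a harmless safety clause that ensures $P_{k-1}\geq 5$ so that both $(P_{k-1}-4)\mathring{n}_{k-2}^g$ and $(P_{k-1}-3)\mathring{n}_{k-2}^g$ are positive; no genuine asymptotics are required, and the inequality holds term by term for every $k$ large enough that $P_{k-1}\geq 5$.
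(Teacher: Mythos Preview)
Your argument is correct, but it handles the divisibility case differently from the paper. The paper also splits on whether $P_{k-1}\mid g$, rewriting the bound from Theorem~\ref{T: distroPgP} as $[(P_{k-1}-2)-2]\,\mathring{n}_{k-2}^{g}$ and obtaining $\mathring{n}_{k-1}^{g}-2\mathring{n}_{k-2}^{g}$ in Case~A but only $\frac{P_{k-1}-2}{P_{k-1}-1}\mathring{n}_{k-1}^{g}-2\mathring{n}_{k-2}^{g}$ in Case~B. Rather than repair Case~B, the paper uses the hypothesis ``sufficiently large $P_{k}$'' in an essential way: it takes $P_{k}>P_{\boldsymbol{\pi}(P_{l}\#)}>g$, which forces $P_{k-1}\nmid g$ and discards Case~B altogether. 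You instead meet Case~B head-on by going back into the proof of Lemma~\ref{L: distrofPgPinSj_2} and observing that when $P_{k-1}\mid g$ the count of intermediate pairs is $P_{k-1}-1$ rather than $P_{k-1}-2$, so the ``at most two exclusions'' argument yields the sharper factor $P_{k-1}-3$. The paper's route is shorter and explains what the largeness hypothesis is actually doing; your route is a bit more work but delivers a stronger conclusion, since your inequality holds for every $k>l+2$ uniformly in $g$, and your reading of ``sufficiently large $P_{k}$'' as a mere positivity safeguard is in fact a genuine strengthening of what the paper proves.
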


\begin{proof}
	We can also write the inequality~(\ref{E: primespairsinsub}) as:
	
	\begin{align}\label{E: thmversion} \notag
		\mathring{n}_{S_{k}^{(m)}}^{g} &\ge (P_{k-1}-4) \prod_{i=l+1}^{k-2}(P_{i}-2)\cdot \prod_{\substack{i=l+1\\ P_{i}|g }}^{k-2}\frac{(P_{i}-1)}{(P_{i}-2)} \\ \notag
		&=[(P_{k-1}-2)-2] \prod_{i=l+1}^{k-2}(P_{i}-2)\cdot \prod_{\substack{i=l+1\\ P_{i}|g }}^{k-2}\frac{(P_{i}-1)}{(P_{i}-2)} \\ \notag
		&=\left[\prod_{i=l+1}^{k-1}(P_{i}-2)-2\prod_{i=l+1}^{k-2}(P_{i}-2)\right]\cdot \prod_{\substack{i=l+1\\ P_{i}|g }}^{k-2}\frac{(P_{i}-1)}{(P_{i}-2)}\\ 
		&=
		\begin{cases} 
			\mathring{n}_{k-1}^{g}-2\mathring{n}_{k-2}^{g}&\textrm{if}\quad	P_{k-1} \nmid g \\
			\frac{(P_{k-1}-2)}{(P_{k-1}-1)}\mathring{n}_{k-1}^{g}-2\mathring{n}_{k-2}^{g} &\textrm{if}\quad	P_{k-1}|g
		\end{cases}
	\end{align}
	
	Note that by choosing $P_{k}$ sufficiently large, e.g., $P_{k}>P_{\boldsymbol{\pi}\left(P_{l}\#\right)}>g$, only the first case in (\ref{E: thmversion}) applies.
\end{proof}

\begin{corollary}\label{C: asymtoticlimit}
Given the minimum distribution of $(\widetilde{PgP})_{k}$ across the subsets of $S_{k}$ as in Theorem~\ref{T: distroPgP}, that minimum assymtotically approaches the average distribution of $(\widetilde{PgP})_{k}$ to subsets of $S_{k}$:
	\[
\min{(\mathring{n}_{S_{k}^{(m)}}^{g})}\longrightarrow\frac{ \mathring{n}_{k}^{g}}{P_{k}}\quad \textrm{as} \quad k\longrightarrow \infty
	\]
\end{corollary}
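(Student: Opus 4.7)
The plan is to sandwich $\min(\mathring{n}_{S_k^{(m)}}^g)$ between the lower bound furnished by Theorem~\ref{T: distroPgP} and the trivial upper bound given by the average itself, then to verify that these two bounds are asymptotically equivalent. First I would note that since the $P_k$ subsets $S_k^{(m)}$ partition $S_k$ and collectively carry exactly $\mathring{n}_k^g$ prospective prime pairs with gap $g$ (as counted in Theorem~\ref{T: noprotpp}), the minimum over the $P_k$ nonnegative counts is automatically at most their mean, giving
\[
(P_{k-1}-4)\,\mathring{n}_{k-2}^{g} \;\leq\; \min(\mathring{n}_{S_k^{(m)}}^g) \;\leq\; \frac{\mathring{n}_k^g}{P_k}.
\]

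Next I would compute the ratio of the two extreme members of this squeeze. Once $k$ is large enough that $P_{k-1} > g$, neither $P_{k-1}$ nor $P_k$ can divide $g$, so the auxiliary divisor products $\prod_{P_i\mid g}(P_i-1)/(P_i-2)$ appearing in $\mathring{n}_k^g$ and $\mathring{n}_{k-2}^g$ are truncated at the same last index and cancel against one another. What remains in $\mathring{n}_k^g$ beyond $\mathring{n}_{k-2}^g$ is the factor $(P_{k-1}-2)(P_k-2)$, so
\[
\frac{(P_{k-1}-4)\,\mathring{n}_{k-2}^{g}}{\mathring{n}_k^g/P_k} \;=\; \frac{P_k\,(P_{k-1}-4)}{(P_{k-1}-2)(P_k-2)}.
\]
Since both $P_k/(P_k-2) \to 1$ and $(P_{k-1}-4)/(P_{k-1}-2) \to 1$ as $k \to \infty$, this ratio tends to $1$. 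Applying the squeeze theorem to the ratios yields $\min(\mathring{n}_{S_k^{(m)}}^g)\big/(\mathring{n}_k^g/P_k) \to 1$, which is the asymptotic equivalence asserted by the corollary.

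No step presents a real obstacle. The only bookkeeping detail worth flagging is the stabilization of the auxiliary divisor product: once $P_{k-1}$ exceeds $g$, no further factors are appended to $\prod_{P_i\mid g}(P_i-1)/(P_i-2)$, so this constant appears identically in numerator and denominator and cancels. Modulo this observation, the proof reduces to the two-line limit computation displayed above, with the existence of the lower bound itself (the genuine content) already supplied by Theorem~\ref{T: distroPgP}.
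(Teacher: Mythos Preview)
Your proposal is correct and follows essentially the same route as the paper: both compute the ratio
\[
\frac{P_k\,(P_{k-1}-4)\,\mathring{n}_{k-2}^{g}}{\mathring{n}_k^{g}}=\frac{P_k(P_{k-1}-4)}{(P_k-2)(P_{k-1}-2)}
\]
and observe that it tends to $1$. The one mild difference is that you frame the argument as a genuine squeeze, supplying the upper bound $\min\le \mathring{n}_k^g/P_k$ explicitly, whereas the paper simply identifies ``the minimum'' with the lower bound of Theorem~\ref{T: distroPgP} and checks that its ratio to the average is $<1$ and $\to 1$; your formulation is slightly tidier but not substantively different.
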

\begin{proof}
	Given that $S_{k}$ has $P_{k}$ subsets, $S_{k}^{(m)}$, the stated minimum number of prospective prime pairs in each subset generated for each $(\widetilde{PgP})_{l}$ accounts for
	
	\[
	P_{k} \cdot(P_{k-1}-4)\prod_{i=l+1}^{k-2}(P_{i}-2)\cdot \prod_{\substack{i=l+1\\ P_{i}|g }}^{k-2}\frac{(P_{i}-1)}{(P_{i}-2)}
	\]
	of the $\mathring{n}_{k}^{g}$ total prospective prime pairs in $S_{k}$ for each $(\widetilde{PgP})_{l}$. Therefore the fraction of the total represented by the minimum is:
	
	\begin{align}\notag
		\frac{P_{k} \cdot \min{(\mathring{n}_{S_{k}^{(m)}}^{g})}}{\mathring{n}_{k}^{g}}
		=& \frac{P_{k}(P_{k-1}-4)\prod_{i=l+1}^{k-2}(P_{i}-2)\cdot \prod_{\substack{i=l+1\\ P_{i}|g }}^{k-2}\frac{(P_{i}-1)}{(P_{i}-2)}}{\prod_{i=l+1}^{k}(P_{i}-2)\cdot \prod_{\substack{i=l+1\\ P_{i}|g }}^{k-2}\frac{(P_{i}-1)}{(P_{i}-2)}}\\ \notag
		=&
		\frac{P_{k}(P_{k-1}-4)}{(P_{k}-2)(P_{k-1}-2)}\quad\textrm{then letting}\quad \Delta=P_{k}-P_{k-1}\\ \notag
		&=\frac{1-\frac{\Delta +2}{P_{k}-2}}{1-\frac{\Delta +2}{P_{k}}}<1
	\end{align}
Therefore the ratio, which is less than $1$ approaches $1$ as $k$ gets large, proving the corollary.
\end{proof}

Corollary~\ref{C: asymtoticlimit} means that when we consider the distribution of prospective prime pairs with gap $g$ in $S_{k}$ that there is no systematic allotment of more prospective prime pairs to one or a few subsets and overall the difference in allotments averages out.  Therefore we can say that prospective twin primes are fairly evenly distributed between the subsets of $S_{k}$.

Additionally, while each individual $(\widetilde{PgP})_{k-1}$ in a given subset of $S_{k-1}$ does not contribute to all subsets of $S_{k}$, collectively they do. To prove this we need to determine the contribution: $S_{k-1}^{(m)}\longrightarrow S_{k}^{(m')}$.

\begin{lemma}\label{L: equaldistro}
	Given the set $S_{l}=\left\{N: 5 \le N \le 4+P_{l}\#  \right\}$  containing at least one prospective prime pair with gap $g$, then for $k>l+4$,
	each subset $S_{k-1}^{(m)}\subset S_{k-1}$ generates a minimum of $(P_{k-2}-6)\cdot \mathring{n}_{k-3}^{g}\quad$ $(\widetilde{PgP})_{k}$ into each subset $S_{k}^{(m')}\subset S_{k}$.	
\end{lemma}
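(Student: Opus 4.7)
The plan is to chain Lemma \ref{L: distrofPgPinSj_2} with a distinctness argument pushed one level deeper than where it is stated. Fix an arbitrary seed $(\widetilde{PgP})_{k-3}\in S_{k-3}$; by Lemma \ref{L: distrofPgPinSj_2} applied at $j=k-3$, it deposits a minimum of $P_{k-2}-4$ prospective prime pairs $(\widetilde{PgP})_{k-1}$ inside the prescribed subset $S_{k-1}^{(m)}$. These pairs are naturally parameterized by the admissible values of $m_{k-2}\in\{0,\dots,P_{k-2}-1\}$ once the outer multiplier is pinned to $m_{k-1}=m$. I will then show that at most two of these pairs are blocked from propagating a $(\widetilde{PgP})_{k}$ into the target subset $S_{k}^{(m')}$, so at least $P_{k-2}-6$ survive; multiplying by $\mathring{n}_{k-3}^{g}$ via Theorem \ref{T: noprotpp} yields the claim.

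The decisive substep is that, across the admissible $m_{k-2}$ values, the forbidden subsets $\widehat{m}_{k}$ and $\widehat{m}_{k}'$ of $S_{k}$ are each pairwise distinct. By equation (\ref{E: defmhat}) the map $\widetilde{P}_{\{k-1\}}\bmod P_{k}\mapsto\widehat{m}_{k}$ is an affine bijection, so it suffices to exhibit distinct residues of $\widetilde{P}_{\{k-1\}}$ modulo $P_{k}$. Writing
\[
\widetilde{P}_{\{k-1\}}=\widetilde{P}_{\{k-3\}}+m_{k-2}P_{k-3}\#+m P_{k-2}\#,
\]
two admissible values $m_{k-2}\ne m_{k-2}''$ produce residues differing by $(m_{k-2}-m_{k-2}'')P_{k-3}\#\bmod P_{k}$, which is nonzero because $\gcd(P_{k-3}\#,P_{k})=1$ and $|m_{k-2}-m_{k-2}''|<P_{k-2}<P_{k}$. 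The same reasoning applied to $\widetilde{P}_{\{k-1\}}'=\widetilde{P}_{\{k-1\}}+g$ yields pairwise distinct $\widehat{m}_{k}'$. Consequently at most one of the $P_{k-2}-4$ candidate pairs in $S_{k-1}^{(m)}$ has $\widehat{m}_{k}=m'$ and at most one has $\widehat{m}_{k}'=m'$, exhausting the possible obstructions and leaving at least $P_{k-2}-6$ contributions to $S_{k}^{(m')}$.

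Combining the ingredients, each $(\widetilde{PgP})_{k-3}\in S_{k-3}$ contributes at least $P_{k-2}-6$ pairs in $S_{k-1}^{(m)}$ that successfully propagate a $(\widetilde{PgP})_{k}$ into $S_{k}^{(m')}$. Summing over the $\mathring{n}_{k-3}^{g}$ seeds furnished by Theorem \ref{T: noprotpp}, and invoking Lemma \ref{L: uniqueness} so that distinct multiplier tuples across distinct seeds produce distinct $(\widetilde{PgP})_{k}$, gives the advertised lower bound $(P_{k-2}-6)\cdot\mathring{n}_{k-3}^{g}$.

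The main obstacle I anticipate is the pairwise-distinctness step. Lemma \ref{L: distroinSj+2} already handles the analogous distinctness of disallowed subsets one level shallower, but is phrased only for the two components of a single pair, not across a parameterized family at depth $k$. The correct bookkeeping requires recognizing that after $m_{k-1}$ is pinned to $m$ the only free parameter is $m_{k-2}$, whose coefficient $P_{k-3}\#$ is coprime to $P_{k}$ and whose range sits strictly inside a complete residue system modulo $P_{k}$. Once this injectivity is verified, the remainder of the proof is routine inheritance from Lemma \ref{L: distrofPgPinSj_2}, Theorem \ref{T: noprotpp}, and Lemma \ref{L: uniqueness}.
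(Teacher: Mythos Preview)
Your proposal is correct and follows essentially the same route as the paper: invoke Lemma~\ref{L: distrofPgPinSj_2} at level $k-3$ to place at least $P_{k-2}-4$ pairs (parameterized by $m_{k-2}$, with $m_{k-1}=m$ fixed) into $S_{k-1}^{(m)}$, argue that at most two of these are blocked from landing in $S_{k}^{(m')}$, and then multiply by $\mathring{n}_{k-3}^{g}$ from Theorem~\ref{T: noprotpp}. Your direct injectivity argument---that distinct $m_{k-2}$ yield distinct residues of $\widetilde{P}_{\{k-1\}}\bmod P_{k}$ because $\gcd(P_{k-3}\#,P_{k})=1$ and $|m_{k-2}-m_{k-2}''|<P_{k-2}<P_{k}$---is in fact cleaner than the paper's appeal to Lemma~\ref{L: distroinSj+2}, which is phrased for variation in $m_{k-1}$ rather than $m_{k-2}$ and so does not apply verbatim here.
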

\begin{proof}
	
	From Lemma~\ref{L: distrofPgPinSj_2} each subset $S_{k-1}^{(m)}$ contains a minimum of  $P_{k-2}-4$ of  $(\widetilde{PgP})_{k-1}\in \left\{(\widetilde{PgP})_{k-1}\right\}_{(\widetilde{PgP})_{k-3}}$. These can be expressed as:
	
	\[
	(\widetilde{PgP})_{k-1}=(\widetilde{PgP})_{k-3}+m_{k-2}P_{k-3}\#+mP_{k-2}\#
	\]
	where they are distinguished by  $P_{k-2}-4$ distinct values of $m_{k-2}$.
	
	Then the contribution of these to $S_{k}^{(m')}$ is:
	
\[
	(\widetilde{PgP})_{k}=(\widetilde{PgP})_{k-3}+m_{k-2}P_{k-3}\#+mP_{k-2}\#+m'P_{k-1}\#\in S_{k}^{(m')}
\]
These are again distinguished by the $P_{k-2}-4$ distinct values of $m_{k-2}$ since we consider $m$ and $m'$ as constants, corresponding to two arbitrary subsets of $S_{k-1}$ and $S_{k}$ respectively.
	
Then we know from Lemma \ref{L: distroinSj+2} that for each value of $m_{k-2}$, each corresponding to a single $(\widetilde{PgP})_{k-2}\in S_{k-2}^{(m_{k-2})}$, that the disallowed subsets for each component of the resulting $(\widetilde{PgP})_{k}$ are separately distinct. But as discussed in the proof of Lemma \ref{L: distrofPgPinSj_2}, the disallowed subsets for the opposite components of two  $(\widetilde{PgP})_{k}$ may be the same. Therefore at most two of the $(\widetilde{PgP})_{k}$ may be disallowed in subset $S_{k}^{(m')}$, leaving a minimum of $P_{k-2}-6$ prospective prime pairs with gap $g$ in $S_{k}^{(m')}$ that are generated by such prospective prime pairs in $S_{k-1}^{(m)}$.
	
	Therefore given the existence of $S_{l}$ prescribed by the statement in the corollary, and given Theorem~\ref{T: noprotpp},we have:
	
	\[
	(P_{k-2}-6)\cdot \mathring{n}_{k-3}^{g}
	\]
	as the minimum contribution of $S_{k-1}^{(m)}$ to $S_{k}^{(m')}$.
	
\end{proof}

\begin{lemma}\label{L: uniformdistro}
	
	With respects to minimum distributions of prospective prime pairs with gap $g$, the contribution of $S_{k-1}^{(m)}$ to $S_{k}^{(m')}$ in the process of generating prospective prime pairs into $S_{k}$ from $S_{k-1}$ is asymtotically uniform across all subsets $m$ and $m'$.
\end{lemma}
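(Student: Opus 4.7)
I read the statement as saying that the lower bound $(P_{k-2}-6)\mathring n_{k-3}^g$ from Lemma~\ref{L: equaldistro}, which holds uniformly in $m$ and $m'$, is asymptotic as $k\to\infty$ to the \emph{average} number of $(\widetilde{PgP})_k\in S_k^{(m')}$ generated from $S_{k-1}^{(m)}$, taken over all pairs $(m,m')$.  Once this is established, the uniform minimum and the average coincide in the limit, which forces the distribution across $(m,m')$ to be essentially uniform.

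First I would compute the average.  Each $(\widetilde{PgP})_k$ descends from exactly one $(\widetilde{PgP})_{k-1}$ (by the parametrisation (\ref{E: nextproprime}) together with the uniqueness in Lemma~\ref{L: uniqueness}), and that ancestor sits in exactly one subset $S_{k-1}^{(m)}$; moreover $(\widetilde{PgP})_k$ itself sits in exactly one $S_k^{(m')}$.  Summing contributions over all $(m,m')$ therefore reproduces the total $\mathring n_k^g$ from Theorem~\ref{T: noprotpp}, so the mean contribution of a single source subset to a single target subset is
\[
\overline n \;=\; \frac{\mathring n_k^g}{P_{k-1}\,P_k}.
\]

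Next I would use Theorem~\ref{T: noprotpp} to express $\mathring n_k^g$ in terms of $\mathring n_{k-3}^g$.  Since $g$ is fixed and has only finitely many prime divisors, for all sufficiently large $k$ none of $P_{k-2}, P_{k-1}, P_k$ divides $g$, so the corrective factor $\prod_{P_i\mid g}(P_i-1)/(P_i-2)$ is the same in $\mathring n_k^g$ and in $\mathring n_{k-3}^g$ and one obtains $\mathring n_k^g = (P_{k-2}-2)(P_{k-1}-2)(P_k-2)\,\mathring n_{k-3}^g$.  Substituting into the minimum from Lemma~\ref{L: equaldistro} and dividing gives
\[
\frac{(P_{k-2}-6)\,\mathring n_{k-3}^g}{\overline n}
=\frac{P_{k-1}\,P_k\,(P_{k-2}-6)}{(P_{k-2}-2)(P_{k-1}-2)(P_k-2)},
\]
and the same style of limit argument used at the end of Corollary~\ref{C: asymtoticlimit} shows that each of the three factors tends to $1$, so the ratio tends to $1$.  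This yields the claim with ``asymptotically uniform'' read as $\min/\overline n \to 1$.

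The main obstacle is not algebraic but interpretive: one must decide what ``asymptotically uniform'' should mean when only a uniform lower bound on each $(m,m')$ contribution is available.  The natural choice, and the one consistent with the way Corollary~\ref{C: asymtoticlimit} uses the identical phrase, is that the common minimum approaches the overall mean.  Secondary care is needed for the finitely many indices $i\le k$ at which $P_i\mid g$, but these contribute a fixed constant to both $\mathring n_k^g$ and $\mathring n_{k-3}^g$ that cancels identically in the displayed ratio and so does not affect the limit.
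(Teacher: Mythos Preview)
Your argument is correct, and the limit computation is fine; the route, however, differs from the paper's.  You compare the uniform lower bound $(P_{k-2}-6)\,\mathring n_{k-3}^g$ from Lemma~\ref{L: equaldistro} against the \emph{true average} $\mathring n_k^g/(P_{k-1}P_k)$ over all source--target pairs $(m,m')$, mirroring exactly the min-versus-mean device of Corollary~\ref{C: asymtoticlimit}.  The paper instead compares two \emph{lower bounds}: it sums the subset-to-subset minimum over the $P_{k-1}$ source subsets to get $P_{k-1}(P_{k-2}-6)\,\mathring n_{k-3}^g$ and then divides by the set-to-subset minimum $(P_{k-1}-4)\,\mathring n_{k-2}^g$ from Theorem~\ref{T: distroPgP}, obtaining the two-factor ratio
\[
\frac{P_{k-1}(P_{k-2}-6)}{(P_{k-2}-2)(P_{k-1}-4)}\to 1,
\]
whereas your construction yields the three-factor ratio $\dfrac{P_{k-1}P_k(P_{k-2}-6)}{(P_{k-2}-2)(P_{k-1}-2)(P_k-2)}\to 1$.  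Your version has the advantage of a cleaner interpretation (minimum tends to mean, hence uniform), and you correctly flag that the statement itself is ambiguous on this point; the paper's version has the advantage of using only quantities already estimated in Theorem~\ref{T: distroPgP}, avoiding the need to invoke the total $\mathring n_k^g$ and the descent-uniqueness argument.  Either reading suffices for the application in Theorem~\ref{T: numtwinprimes}.
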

\begin{proof}
	Lemma~\ref{L: equaldistro}	gives the minimum contributions of prospective prime pairs with gap $g$ from subset $S_{k-1}^{(m)}$ to subset $S_{k}^{(m')}$ as:
	\[
	(P_{k-2}-6)\cdot \mathring{n}_{k-3}^{g}
	\]
	Given that there are $P_{k-1}$ subsets in $S_{k-1}$ the total contribution from all subsets of $S_{k-1}$ is, at a minimum:
	\[
	P_{k-1}\cdot(P_{k-2}-6)\cdot \mathring{n}_{k-3}^{g}
	\]
	Then we know the minimum distribution of prospective prime pairs with gap $g$ from $S_{k-1}$ to each subset of $S_{k}$ is given by Theorem~\ref{T: distroPgP} as:
	
	\[
	\mathring{n}_{S_{k}^{(m)}}^{g}\ge \mathring{n}_{k-2}^{g}(P_{k-1}-4)
	\]	
	
	Taking the ratio of the minimum subset to subset contribution to the minimum contribution from set to subset gives:
	
	\begin{align}\notag
		\frac{P_{k-1}\cdot(P_{k-2}-6)\cdot \mathring{n}_{k-3}^{g}}{\mathring{n}_{k-2}^{g}(P_{k-1}-4)}=&\frac{P_{k-1}\cdot(P_{k-2}-6)}{(P_{k-2}-2)(P_{k-1}-4)}\\ \notag
		=&\frac{P_{k-2}-6}{P_{k-2}-6+4\left(1-\frac{P_{k-2}}{P_{k-1}}\right)+\frac{8}{P_{k-1}}}
	\end{align}
	The ratio is less than $1$ and clearly tends to $1$ for large $k$ proving the Lemma.
	
\end{proof}

\section{Prime pairs with gap g}
The foregoing results now allow the following theorem that proves the existence of actual prime pairs with gap $g$ given prospective prime pairs with gap $g$.

\begin{theorem}\label{T: numtwinprimes}
	Given a set $S_{r}=\left\{N: 5 \le N \le 4+P_{r}\#\right\}$ containing at least one prospective prime pair with gap $g$: $(\widetilde{PgP})_{r}$.
	Pick $l\ge r$ and define $P_{k}=P_{\boldsymbol{\pi}\left(\sqrt{P_{l}\#}\right)}$, then let $\mathring{n}_{P_{k}\rightarrow P_{k+1}^2}^{g}$ be the number of prime pairs with gap $g$ between $P_{k}$ and $P_{k+1}^2$ that are generated from $(\widetilde{PgP})_{r}$, then:
	\[
	\mathring{n}_{p_{k}\rightarrow P_{k+1}^{2}}^{g}\ge
\mathring{n}_{l}^{g}\cdot  \prod_{j=l}^{k-1}\frac{(P_{j}-4)  }{(P_{j}-2)}  \cdot \prod_{\substack{i=l\\ P_{i}|g }}^{k-1} \frac{(P_{i}-2)}{(P_{i}-1)}
	\]
	where, $\mathring{n}_{l}^{g}$ as in Theorem~\ref{T: noprotpp} is:
	\[
	\mathring{n}_{l}^{g}=\prod_{i=r+1}^{l}\left(P_{i}-2\right)\cdot\prod_{\substack{i=r+1  \\ P_{i}|g }}^{l}\frac{P_{i}-1}{P_{i}-2}
	\] 
	is the number of prospective prime pairs with gap $g$ in $S_{l}$ that are derived from each such prospective prime pair in $S_{r}$.
\end{theorem}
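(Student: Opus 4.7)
The plan has two parts: (a) reduce counting actual prime pairs to counting prospective prime pairs in the interval $(P_k, P_{k+1}^2)$, and (b) apply the distribution results iteratively to bound the latter from below.

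For (a), I would note that any composite $N \in (P_k, P_{k+1}^2)$ has a prime divisor $\le \sqrt{N} < P_{k+1}$, hence $\le P_k$; but a prospective prime in $S_k$ is by definition coprime to every $P \le P_k$, so any such $N$ must be prime. Applying this to both components of a prospective prime pair with gap $g$ in $S_k$ whose smaller component exceeds $P_k$ and whose larger component is below $P_{k+1}^2$ yields an actual prime pair with gap $g$. Hence $\mathring{n}^g_{P_k \to P_{k+1}^2}$ is bounded below by the number of such prospective prime pairs descended from $(\widetilde{PgP})_r$.

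For (b), I would start from the $\mathring{n}_l^g$ prospective prime pairs with gap $g$ in $S_l$ supplied by Theorem~\ref{T: noprotpp}. The choice $P_k = P_{\pi(\sqrt{P_l\#})}$ guarantees $P_{k+1}^2 > P_l\#$, so every pair in $S_l$ automatically satisfies the upper cutoff $< P_{k+1}^2$, and only a negligible fraction (those whose smaller component is in $[5,P_k]$) fails the lower cutoff. To promote such an $S_l$-pair to a prospective prime pair in $S_k$ with the same numerical values, both components must additionally be coprime to each of $P_{l+1}, \ldots, P_k$. At the sieving step $S_j \to S_{j+1}$, at most two residues---the values $\widehat m_{j+1}$ and $\widehat m_{j+1}'$ defined by Equation~\ref{E: defmhat}, collapsing to a single residue when $P_{j+1} \mid g$---are disallowed. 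Combining the minimum-per-subset bound of Lemma~\ref{L: distrofPgPinSj_2} with the uniformity given by Lemma~\ref{L: equaldistro} confines the loss at each level to a factor of at most $(P_j - 4)/(P_j - 2)$, improved to $(P_j - 2)/(P_j - 1)$ when $P_j \mid g$. Multiplying these survival factors across $j = l, l+1, \ldots, k-1$ telescopes to the product in the statement.

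The main obstacle is pinning down the per-step ratio $(P_j - 4)/(P_j - 2)$ and verifying that it iterates without compounding losses. The ``$-4$'' in Lemma~\ref{L: distrofPgPinSj_2} arises from up to two disallowed residues on each of two successive levels, so one must confirm that passing through all $k - l$ sieving steps does not produce extra exclusions beyond those already absorbed into the product. Secondarily, the index shift (the stated product runs over $j = l$ through $k-1$, whereas the sieving primes are $P_{l+1}, \ldots, P_k$) has to be reconciled carefully with the one-step construction \eqref{E: nextproprime}, and the correction $(P_i - 2)/(P_i - 1)$ at primes $P_i \mid g$ must be traced through the one-residue-exclusion case described in Lemma~\ref{L: distrofSj+1}.
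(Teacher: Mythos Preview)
Your plan follows the paper's route closely: restrict to prospective pairs that stay in the chain $S_l = S_{l+1}^{(0)} \subset S_{l+2}^{(0)} \subset \cdots \subset S_k^{(0)}$ (equivalently $m_j = 0$ at every stage), use Theorem~\ref{T: distroPgP} for the minimum count in each zeroth subset, and invoke Lemmas~\ref{L: equaldistro}--\ref{L: uniformdistro} to justify treating $\min(\mathring n_{S_j^{(0)}}^g)/\mathring n_j^g$ as the survival fraction at step $j$. The paper makes this explicit as
\[
\mathring n_{P_k \to P_{k+1}^2}^g \;\ge\; \min\bigl(\mathring n_{S_k^{(0)}}^g\bigr)\prod_{j=l+1}^{k-1}\frac{\min\bigl(\mathring n_{S_j^{(0)}}^g\bigr)}{\mathring n_j^g}
\]
and then carries out a lengthy algebraic simplification (expanding both numerator and denominator via Theorems~\ref{T: noprotpp} and~\ref{T: distroPgP}) to reach the stated product; your ``index shift'' concern is resolved in that computation rather than by a conceptual argument.

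One misreading to correct: the factor $(P_i-2)/(P_i-1)$ for $P_i \mid g$ is \emph{not} an improved survival rate replacing $(P_j-4)/(P_j-2)$. In the stated bound it is an additional multiplier $<1$, and it emerges purely from the algebra when the $(P_i-1)/(P_i-2)$ factors carried by $\mathring n_l^g$ and by the subset minima partially cancel against those in the totals $\mathring n_j^g$. So do not try to interpret it as the one-residue-exclusion case of Lemma~\ref{L: distrofSj+1}; trace it through the product simplification instead.

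Finally, the paper's proof contains a second half you do not mention: it shows the right-hand side is monotonically increasing in $l$ by comparing the expressions at $l$ and $l+1$ (with $k' = \boldsymbol\pi(\sqrt{P_{l+1}\#})$) and estimating $\prod_{i=k}^{k'-1}(P_i-4)/(P_i-2)$ via the prime counting function. This is what ultimately guarantees the bound exceeds $1$ for large $l$ and is what Theorem~\ref{T: main} relies on, so your proposal should account for it.
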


\begin{proof}
	Given $l$ and $P_{k}=P_{\boldsymbol{\pi}\left(\sqrt{P_{l}\#}\right)}$ consider the set of sequential natural numbers $S_{k}=\left\{5 \longrightarrow 4+P_{k}\#\right\}$. We will show that $S_{k}$ always contains prospective prime pairs, $(\widetilde{PgP})_{k} \in S_{k}$ prime to all $P \le P_{k}$ where $P_{k}< (\widetilde{PgP})_{k} < P_{k+1}^2$ and consequently those $(\widetilde{PgP})_{k}=(PgP)_{k}$ are actual prime pairs with gap $g$ and the number of such prime pairs meets the stated minimum.
	
	Note that while $P_{l}\#+1$ is the largest prospective prime number in $S_{l}=\left\{5 \longrightarrow 4+P_{l}\#\right\}$ in that it is prime to all $P \le P_{l}$,  it cannot be the square of a prime number.\footnote{Any prime number $>3$ has the form $6n\pm1$ and its square is then $36n^2\pm 12n +1$. Then equating $P_{l}\#+1$ to that square gives $6n^2\pm2n=\frac{P_{l}\#}{6}$. This cannot hold because the left side is even and the right is odd.}
	
	Therefore, with the definition of $P_{k}$ we have: 
	
	\[ 
	P_{k}^2 < P_{l}\# \longrightarrow P_{k}^2 \in S_{l}
	\]
	
	and given
	
	$P_{k+1}^2=\left(P_{\boldsymbol{\pi}\left(\sqrt{P_{l}\#}\right)+1}\right)^2$, we have:
	
	\[
	P_{l}\# < P_{k+1}^2 < P_{l+1}\#\longrightarrow  P_{k+1}^2 \in S_{l+1}\quad \& \quad P_{k+1}^2 \notin S_{l}
	\]
	
	Note that $P_{k+1}$ is the smallest prime number whose square is greater than $4+P_{l}\#$ and $P_{k}$ is the largest prime number whose square is less than $P_{l}\#$.  Therefore all prospective prime numbers and prospective prime pairs in $S_{l}$ are less than $P_{k+1}^2$. It remains to show that some $(\widetilde{PgP})_{l}$ are greater than $P_{k}$ and are prime to all $P \le P_{k}$ which means some $(\widetilde{PgP})_{l}=(\widetilde{PgP})_{k}$ and being less than $P_{k+1}^2$ are therefore actual prime pairs with gap $g$. In doing this we will show the inequality for $\mathring{n}_{P_{k}\rightarrow P_{k+1}^2}^{g}$ holds.
	
	To prove the theorem we must show there are some  $(\widetilde{PgP})_{l}=(\widetilde{PgP})_{k}$. Given that: 
	\[(\widetilde{PgP})_{l} \in S_{l}=S_{l+1}^{(0)}\subset S_{l+2}^{(0)}\subset \cdots \subset S_{k}^{(0)}
	\]
	This requires $m_{j}=0$ at each stage of: $(\widetilde{PgP})_{k}=(\widetilde{PgP})_{l}+\sum_{j=l+1}^{k}m_{j}P_{j-1}\#$. 
	
	We know, $S_{l+1}^{(0)}$ contains  a minimum number of prospective prime pairs with gap $g$, represented as $\min(\mathring{n}_{S_{l+1}^{(0)}}^{g})$ and given by Theorem~\ref{T: distroPgP}, which are prime to $P\le P_{l+1}$ and since $m_{l+1}=0$, $(\widetilde{PgP})_{l+1}=(\widetilde{PgP})_{l}$. 
	
	Then given $S_{l+2}^{(0)}=S_{l+1}$ we know again from  Theorem~\ref{T: distroPgP} that $S_{l+2}^{(0)}$ has a minimum number of prospective prime pairs with gap $g$ represented as $\min(\mathring{n}_{S_{l+2}^{(0)}}^{g})$ which are prime to $P\le P_{l+2}$. However all subsets of $S_{l+1}$ have contributed prospective prime pairs with gap $g$ to $S_{l+2}^{(0)}$ and we need to only consider those contributed by $S_{l+1}^{(0)}$. 
	
	Lemmas~\ref{L: equaldistro} and \ref{L: uniformdistro} showed that all subsets of $S_{l+1}$ contribute the same minimum number of prospective prime pairs to all subsets of $S_{l+2}$ and that the contributions remain uniform asymtotically for large $l$. Then the fraction of prospective prime pairs with gap $g$ in $S_{l+2}^{(0)}$ generated from $(\widetilde{PgP})_{l}=(\widetilde{PgP})_{l+1} \in S_{l+1}^{(0)}$ is therefore given by: 
	
	\[
	\frac{\min\left(\mathring{n}_{S_{l+1}^{(0)}}^{g}\right)}{\mathring{n}_{l+1}^{g}}\min\left(\mathring{n}_{S_{l+2}^{(0)}}^{g}\right)  =  \textrm{minimum number of}\quad (\widetilde{PgP})_{l+2}=(\widetilde{PgP})_{l}
	\]

	Then we have $\min\left(n_{S_{l+3}^{(0)}}^{g}\right)$ prospective prime pairs, $(\widetilde{PgP})_{l+3}\in S_{l+3}^{(0)}$ derived from all $(\widetilde{PgP})_{l+2}\in S_{l+2}$. The fraction of those derived from the set of $(\widetilde{PgP})_{l+2}=(\widetilde{PgP})_{l}\in S_{l+2}^{(0)}$ is:
	
	\begin{multline}\notag
	\frac{\min\left(\mathring{n}_{S_{l+1}^{(0)}}^{g}\right)}{\mathring{n}_{l+1}^{g}}\cdot\frac{\min\left(\mathring{n}_{S_{l+2}^{(0)}}^{g}\right)}{\mathring{n}_{l+2}^{g}}\cdot \min\left(\mathring{n}_{S_{l+3}^{(0)}}^{g}\right)\\ 
	= \textrm{minimum number of}\quad (\widetilde{PgP})_{l+3}=(\widetilde{PgP})_{l}
	\end{multline}
	
	Carrying this process forward up to the number of $(\widetilde{PgP})_{k}=(\widetilde{PgP})_{l}$, where then $P_{k}<(\widetilde{PgP})_{l}\le P_{k+1}^2$, gives:
	
	\begin{equation}\label{E: numtleqg}
		\mathring{n}_{p_{k}\rightarrow P_{k+1}^{2}}^{g} \ge \min\left(\mathring{n}_{S_{k}^{(0)}}^{g}\right) \prod_{j=l+1}^{k-1}\frac{\min\left(\mathring{n}_{S_{j}^{(0)}}^{g}\right)}{\mathring{n}_{j}^{g}}
	\end{equation}
	
	Expanding this using Theorem~\ref{T: distroPgP} 
	and Theorem~\ref{T: noprotpp} we get:
	
\begin{align}\label{E: abc}\notag
\mathring{n}_{p_{k}\rightarrow P_{k+1}^{2}}^{g}\ge& (P_{k-1}-4) \prod_{i=r+1}^{k-2}(P_{i}-2)\cdot \prod_{\substack{i=r+1\\ P_{i}|g }}^{k-2}\frac{(P_{i}-1)}{(P_{i}-2)}\cdot \\ \notag
&\cdot
\prod_{j=l+1}^{k-1}\frac{(P_{j-1}-4) \prod_{i=r+1}^{j-2}(P_{i}-2)\cdot \prod_{\substack{i=r+1\\ P_{i}|g }}^{j-2}\frac{(P_{i}-1)}{(P_{i}-2)}}{\prod_{i=r+1}^{j}(P_{i}-2)\cdot \prod_{\substack{i=r+1\\ P_{i}|g }}^{j}\frac{(P_{i}-1)}{(P_{i}-2)}}\\ \notag
=&
(P_{k-1}-4) \prod_{i=r+1}^{k-2}(P_{i}-2)\cdot \prod_{j=l+1}^{k-1}\frac{(P_{j-1}-4) \prod_{i=r+1}^{j-2}(P_{i}-2) }{\prod_{i=r+1}^{j}(P_{i}-2)}  \cdot\\ \notag
&\cdot \prod_{\substack{i=r+1\\ P_{i}|g }}^{k-2}\frac{(P_{i}-1)}{(P_{i}-2)}\cdot\prod_{\substack{j=l+1\\ P_{i}|g }}^{k-1} \frac{\prod_{\substack{i=r+1\\ P_{i}|g }}^{j-2}\frac{(P_{i}-1)}{(P_{i}-2)}}{\prod_{\substack{i=r+1\\ P_{i}|g }}^{j}\frac{(P_{i}-1)}{(P_{i}-2)}} \\ \notag
=&
(P_{k-1}-4) \prod_{i=r+1}^{l}(P_{i}-2)\cdot \prod_{i=l+1}^{k-2}(P_{i}-2)\cdot \prod_{j=l+1}^{k-1}\frac{(P_{j-1}-4)  }{{(P_{j}-2)(P_{j-1}-2)} }\\ \notag
&\cdot \prod_{\substack{i=r+1\\ P_{i}|g }}^{k-2}\frac{(P_{i}-1)}{(P_{i}-2)}\cdot \prod_{\substack{j=l+1\\ P_{j}|g }}^{k-1} \frac{(P_{j}-2)}{(P_{j}-1)}\frac{(P_{j-1}-2)}{(P_{j-1}-1)} \\ \notag
=&\prod_{i=r+1}^{l}(P_{i}-2)\cdot
\prod_{\substack{i=r+1\\ P_{i}|g }}^{l}\frac{(P_{i}-1)}{(P_{i}-2)}\cdot\prod_{j=l}^{k-1}\frac{(P_{j}-4)}{(P_{j}-2)}\cdot
\prod_{\substack{i=l\\ P_{i}|g }}^{k-1} \frac{(P_{i}-2)}{(P_{i}-1)}\\  
&=\mathring{n}_{l}^{g}\cdot  \prod_{j=l}^{k-1}\frac{(P_{j}-4)  }{(P_{j}-2)}  \cdot \prod_{\substack{i=l\\ P_{i}|g }}^{k-1} \frac{(P_{i}-2)}{(P_{i}-1)}
\end{align}	
	
This is clearly a possitive function and we want to show it is a monotonically increasing function with values greater than $1$. To do this we look at the case for $l\rightarrow l+1$ and $k\rightarrow k'=\boldsymbol{\pi}(\sqrt{P_{l+1}\#})$:

\begin{align}\notag
\mathring{n}_{p_{k'}\rightarrow P_{k'+1}^{2}}^{g}&\ge\mathring{n}_{l+1}^{g}\cdot  \prod_{j=l+1}^{k'-1}\frac{(P_{j}-4)  }{(P_{j}-2)}  \cdot \prod_{\substack{i=l+1\\ P_{i}|g }}^{k'-1} \frac{(P_{i}-2)}{(P_{i}-1)}\\ \notag
&=\prod_{r+1}^{l+1}(P_{i}-2)\prod_{\substack{i=r+1\\ P_{i}|g }}^{l+1} \frac{(P_{i}-2)}{(P_{i}-1)}\cdot  \prod_{j=l+1}^{k'-1}\frac{(P_{j}-4)  }{(P_{j}-2)}  \cdot \prod_{\substack{i=l+1\\ P_{i}|g }}^{k'-1} \frac{(P_{i}-2)}{(P_{i}-1)}\\ \notag
&=\mathring{n}_{l}^{g}\cdot(P_{l+1}-2)\cdot\left(\frac{P_{l+1}-2}{P_{l+1}-1}\right)_{P_{l+1}|g}\cdot \frac{(P_{l}-2)}{(P_{l}-4)}\cdot\prod_{i=l}^{k-1}\frac{(P_{i}-4)}{(P_{i}-2)}\cdot\\  \notag
&\cdot\prod_{i=k}^{k'-1}\frac{(P_{i}-4)}{(P_{i}-2)}\cdot\left(\frac{P_{l}-1}{P_{l}-2}\right)_{P_{l}|g}\cdot\prod_{\substack{i=l\\ P_{i}|g }}^{k-1} \frac{(P_{i}-2)}{(P_{i}-1)}\cdot\prod_{\substack{i=k\\ P_{i}|g }}^{k'-1} \frac{(P_{i}-2)}{(P_{i}-1)}\\  \notag
&=\mathring{n}_{p_{k}\rightarrow P_{k+1}^{2}}^{g}\cdot
(P_{l+1}-2)\cdot\left(\frac{P_{l+1}-2}{P_{l+1}-1}\right)_{P_{l+1}|g}\cdot \frac{(P_{l}-2)}{(P_{l}-4)}\cdot\\  \notag
&\cdot\prod_{i=k}^{k'-1}\frac{(P_{i}-4)}{(P_{i}-2)}\cdot\left(\frac{P_{l}-1}{P_{l}-2}\right)_{P_{l}|g}\cdot\prod_{\substack{i=k\\ P_{i}|g }}^{k'-1} \frac{(P_{i}-2)}{(P_{i}-1)}\\  \notag 
&=\mathring{n}_{p_{k}\rightarrow P_{k+1}^{2}}^{g}\cdot
(P_{l+1}-2)\cdot \frac{(P_{l}-2)}{(P_{l}-4)}\cdot\prod_{i=k}^{k'-1}\frac{(P_{i}-4)}{(P_{i}-2)}\cdot\\  \notag
&\cdot\left(\frac{P_{l+1}-2}{P_{l+1}-1}\right)_{P_{l+1}|g}\cdot\left(\frac{P_{l}-1}{P_{l}-2}\right)_{P_{l}|g}\cdot\prod_{\substack{i=k\\ P_{i}|g }}^{k'-1} \frac{(P_{i}-2)}{(P_{i}-1)}\\  \notag 
\end{align}

If we choose $l$ sufficiently large so that $P\ge P_{l}\rightarrow P\nmid g$, we can ignore the second line of products, giving:
\begin{align}\label{E: successiveterms}
	\mathring{n}_{p_{k'}\rightarrow P_{k'+1}^{2}}^{g}
	&\ge 
	\mathring{n}_{p_{k}\rightarrow P_{k+1}^{2}}^{g}\cdot
	(P_{l+1}-2)\cdot \frac{(P_{l}-2)}{(P_{l}-4)}\cdot\prod_{i=k}^{k'-1}\frac{(P_{i}-4)}{(P_{i}-2)}
\end{align}

Then the last product factor gives:

\begin{equation}\label{E: approxcase}
\prod_{i=k}^{k'-1}\frac{(P_{i}-4)}{(P_{i}-2)}=\prod_{i=k}^{k'-1}\left(1-\frac{2}{P_{i}-2}\right)\\ 
\ge 1-\frac{2(k-k')}{P_{k}}
\end{equation}

Then given $k'=\boldsymbol{\pi}(\sqrt{P_{l}\#})$, giving:

\[
k'\approx\frac{\sqrt{P_{l+1}\#}}{\ln{\sqrt{P_{l+1}\#}}}	=\frac{\sqrt{P_{l+1}}\sqrt{P_{l}\#}}{\ln{\sqrt{P_{l+1}}}+\ln{\sqrt{P_{l}\#}}}	
\]

Ignoring $\ln{\sqrt{P_{l+1}}}$ relative to $\ln{\sqrt{P_{l}\#}}$ and noting that $k\approx \frac{\sqrt{P_{l}\#}}{\ln{\sqrt{P_{l}\#}}}$, gives:
\[
k'\approx \sqrt{P_{l+1}}\cdot k
\]
Using this in (\ref{E: approxcase}) gives:

\begin{align}\label{E: approxfinal}
	\prod_{i=k}^{k'-1}\frac{(P_{i}-4)}{(P_{i}-2)} 
	&\ge 1-\frac{2\sqrt{P_{l+1}}}{\ln{P_{k}}}\ge   1-\frac{2\sqrt{P_{l+1}}}{\ln{\sqrt{P_{l+1}\#}}}
\end{align}

Therefore $\prod_{i=k}^{k'-1}\frac{(P_{i}-4)}{(P_{i}-2)}$, while remaining $<1$ is a monotonically increasing function assymtotically approaching $1$.  The approximation (\ref{E: approxfinal}) is conservative:\footnote{The approximation used in (\ref{E: approxfinal}) allows negative values for small $l$, but is positive for $l\ge 8$, while the term being approximated clearly always has a positive value.}, and  using it for the last term in (\ref{E: successiveterms}) gives for example:

\[
l= 9:\qquad \mathring{n}_{p_{k'}\rightarrow P_{k'+1}^{2}}^{g}
\ge 
1.4\cdot \mathring{n}_{p_{k}\rightarrow P_{k+1}^{2}}^{g}
\]
\[
l= 10:\qquad \mathring{n}_{p_{k'}\rightarrow P_{k'+1}^{2}}^{g}
\ge 
4.5 \cdot\mathring{n}_{p_{k}\rightarrow P_{k+1}^{2}}^{g}
\] 
\[
l= 15:\qquad \mathring{n}_{p_{k'}\rightarrow P_{k'+1}^{2}}^{g}
\ge 
18.7\cdot\mathring{n}_{p_{k}\rightarrow P_{k+1}^{2}}^{g}
\]

\end{proof}

Given Theorem~\ref{T: numtwinprimes} we can prove the following theorem:

\begin{theorem}\label{T: main}
	Given a set $S_{r}=\left\{N: 5 \le N \le 4+P_{r}\#\right\}$ containing at least one prospective prime pair with gap $g$, then given any number $M$ there is always a prime pair with gap $g$ greater than $M$.	
\end{theorem}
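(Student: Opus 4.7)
The plan is to derive Theorem~\ref{T: main} as a direct consequence of Theorem~\ref{T: numtwinprimes} together with the observation that the lower bound established there is unbounded in $l$. Given any bound $M$, I propose to choose $l$ large enough that two things hold simultaneously: first, the lower endpoint $P_{k}=P_{\boldsymbol{\pi}(\sqrt{P_{l}\#})}$ of the interval considered in Theorem~\ref{T: numtwinprimes} satisfies $P_{k}>M$; second, the count $\mathring{n}_{p_{k}\rightarrow P_{k+1}^{2}}^{g}$ is at least $1$. Any prime pair with gap $g$ produced by Theorem~\ref{T: numtwinprimes} in $(P_{k},P_{k+1}^{2})$ then automatically exceeds $M$.

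The first condition is immediate: since $P_{l}\#\to\infty$ with $l$, so does $P_{\boldsymbol{\pi}(\sqrt{P_{l}\#})}$. For the second, I appeal to the recursive estimate~(\ref{E: successiveterms}) and its conservative asymptotic form~(\ref{E: approxfinal}), both established inside the proof of Theorem~\ref{T: numtwinprimes}. They show that advancing $l\mapsto l+1$, which shifts $k\mapsto k'=\boldsymbol{\pi}(\sqrt{P_{l+1}\#})$, multiplies the lower bound on $\mathring{n}_{p_{k}\rightarrow P_{k+1}^{2}}^{g}$ by a factor exceeding $1$ for $l$ sufficiently large, so by iteration the bound grows without limit. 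The worked numerical values reported at $l=9,\,10,\,15$, namely multiplicative factors $1.4$, $4.5$, $18.7$, confirm that this growth regime is reached at modest $l$.

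Putting the two observations together, pick $l$ large enough that both $P_{\boldsymbol{\pi}(\sqrt{P_{l}\#})}>M$ and $\mathring{n}_{p_{k}\rightarrow P_{k+1}^{2}}^{g}\ge 1$. Theorem~\ref{T: numtwinprimes} then guarantees at least one actual prime pair with gap $g$ in $(P_{k},P_{k+1}^{2})$, and that pair necessarily lies above $P_{k}>M$. Since $M$ was arbitrary, prime pairs with gap $g$ are unbounded, which is the claim.

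The main obstacle is organisational rather than substantive: one must cleanly articulate that the two conditions on $l$ can be met simultaneously, and that the divisibility correction $\prod_{P_{i}\mid g}(P_{i}-2)/(P_{i}-1)$ does not silently cancel the growth extracted from the $(P_{j}-4)/(P_{j}-2)$ products. The latter is handled by choosing $l$ beyond the largest prime factor of $g$, after which the two restricted products in the proof of Theorem~\ref{T: numtwinprimes} become constant and the unbounded growth of the remaining factors carries the conclusion. Once this bookkeeping is made explicit, the theorem follows in a few lines.
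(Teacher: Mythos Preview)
Your proposal is correct and follows essentially the same route as the paper: choose $l$ large enough that $P_{k}=P_{\boldsymbol{\pi}(\sqrt{P_{l}\#})}>M$ and invoke Theorem~\ref{T: numtwinprimes}. The paper's own proof is a two-line version of exactly this; it treats the positivity and growth of $\mathring{n}_{p_{k}\rightarrow P_{k+1}^{2}}^{g}$ as already established inside the proof of Theorem~\ref{T: numtwinprimes}, whereas you spell out that second condition explicitly.
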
	

\begin{proof}
	Pick integer $l>r$ so that $P_{k}=P_{\boldsymbol{\pi}\left(\sqrt{P_{l}\#}\right)}>M$.Then we know from Theorem~(\ref{T: numtwinprimes}) that there is always a prime pair with gap $g$  greater than $P_{k}$.	
\end{proof}	

\section{Prime gaps for which de Polignac's conjecture holds}

Given Theorem~\ref{T: numtwinprimes} we need only show the existence of a set $S_{k}$ containing a pair of consecutive prospective prime numbers with a secific gap $g$ to prove de Polignac's conjecture holds for that gap. 

\begin{lemma}\label{L: primegaps}
	Given any prime number $P_{k}>3$, then $P_{k}$ and $P_{k+1}$ are consecutive prospective prime numbers in $S_{k-1}$. 
\end{lemma}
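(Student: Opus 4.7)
The plan is to split the lemma into three independent checks: (i) both $P_k$ and $P_{k+1}$ lie in $S_{k-1}$; (ii) both are prospective primes in $S_{k-1}$; (iii) no integer strictly between them is a prospective prime in $S_{k-1}$. If all three hold, then by the definition of consecutive prospective primes given in Section 2, $(P_k, P_{k+1})$ is a consecutive prospective prime pair in $S_{k-1}$.

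For (i), the lower bound $5 \le P_k$ follows from the hypothesis $P_k > 3$, which forces $P_k \ge 5$. For the upper bound $P_{k+1} \le 4 + P_{k-1}\#$, I would apply Bertrand's postulate in the form $P_{k+1} < 2P_k$ and then verify $2P_k \le 4 + P_{k-1}\#$ for all $k \ge 3$; this is tight at $k=3$ (giving $10 = 10$) and becomes quite loose for larger $k$ because $P_{k-1}\#$ grows far faster than $P_k$. Part (ii) is then immediate: $P_k$ and $P_{k+1}$ are themselves prime and each is strictly greater than $P_{k-1}$, so neither has any prime divisor $\le P_{k-1}$, placing them in $\widetilde{\mathbb{P}}_{k-1}$.

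The substantive step is (iii). Any integer $N$ with $P_k < N < P_{k+1}$ is composite by the definition of consecutive primes, so it has a smallest prime factor $p$. Since $p \le N < P_{k+1}$ we have $p \in \{P_1, \dots, P_k\}$. To force $p \le P_{k-1}$ I would rule out $p = P_k$ by contradiction: were $p = P_k$, then $N$ would be a multiple of $P_k$ with $N > P_k$, giving $N \ge 2P_k$, which contradicts $N < P_{k+1} < 2P_k$ (Bertrand's postulate again). Hence $p \le P_{k-1}$, so $N$ is divisible by some prime $\le P_{k-1}$ and therefore fails to be a prospective prime in $S_{k-1}$.

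The only real obstacle is routing Bertrand's postulate through the correct strict inequality in step (iii); everything else is bookkeeping against the Section 2 definitions. The hypothesis $P_k > 3$ is essential in two places: it places $P_k$ above the lower threshold $5$ of $S_{k-1}$ in step (i), and it ensures $k - 1 \ge 2$ so that $S_{k-1}$ is nontrivial and the coprimality condition in step (ii) is not vacuous.
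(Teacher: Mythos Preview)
Your proof is correct and follows the same three-part decomposition as the paper. The execution of step (iii) differs slightly: rather than invoking Bertrand's postulate to rule out $p = P_k$, the paper observes directly that every prospective prime in $\widetilde{\mathbb{P}}_{k-1}$ below $P_k^2$ is an actual prime (a composite coprime to $P_1,\dots,P_{k-1}$ would have all prime factors $\ge P_k$ and hence be $\ge P_k^2$); since $P_k < P_{k+1} < P_k^2$, any prospective prime strictly between $P_k$ and $P_{k+1}$ would then be a genuine prime, contradicting their consecutiveness as primes. This sieve-style argument needs only the weaker bound $P_{k+1} < P_k^2$ rather than the full Bertrand inequality $P_{k+1} < 2P_k$, and it meshes with the paper's broader framework. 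Conversely, your argument is more self-contained on step (i): the paper simply asserts $P_{k+1} < P_{k-1}\# + 4$ without justification, whereas you derive it explicitly from Bertrand and the check $2P_k \le 4 + P_{k-1}\#$.
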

\begin{proof}
	Consider the set $S_{k-1}=\left\{N: 5 \le N \le 4+P_{k-1}\#\right\}$ and its subset of prospective prime numbers, $\widetilde{\mathbb{P}}_{k-1}$.
	
	Then we know that all prospective prime numbers in $\widetilde{\mathbb{P}}_{k-1}$ that are less than $P_{k}^2$ are actual prime numbers.
	For $P_{k}>3$, we have:
	\[
	P_{k+1}<P_{k-1}\#+4\quad \textrm{and consequently}\quad P_{k},P_{k+1}  \in \widetilde{\mathbb{P}}_{k-1}
	\] 
	and because $P_{k},P_{k+1}<P_{k}^2$, any prospective prime number between them must also be an actual prime number. But $P_{k}$ and $P_{k+1}$ are consecutive prime numbers, so there can be no prospective prime numbers between them and they are consecutive prospective prime numbers as well as consecutive actual prime numbers in $S_{k-1}$.  
\end{proof}

The following theorem follows directly from Theorem~\ref{T: main} together with Lemma~\ref{L: primegaps}

\begin{theorem}\label{T: depolignacPkpm2}
	For all $P_{k}>3 $ there exists infinitely many consecutive prime pairs with gaps $g=P_{k+1}-P_{k}$.
\end{theorem}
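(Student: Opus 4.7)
The plan is to combine Lemma~\ref{L: primegaps} with Theorem~\ref{T: main}, since by the author's own remark preceding the statement, these two ingredients are essentially all that is required. First, I would fix any prime $P_{k}>3$ and set $g:=P_{k+1}-P_{k}$. The goal is to verify that the hypothesis of Theorem~\ref{T: main} is met for this particular $g$, namely the existence of some set $S_{r}$ containing at least one prospective prime pair with gap $g$.

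The next step is to appeal directly to Lemma~\ref{L: primegaps}, which asserts that $(P_{k},P_{k+1})$ is itself a consecutive prospective prime pair inside $S_{k-1}$. Thus, taking $r=k-1$, the set $S_{r}$ contains the prospective prime pair $(P_{k},P_{k+1})$ with gap $g=P_{k+1}-P_{k}$, so the hypothesis of Theorem~\ref{T: main} is satisfied.

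Now I would invoke Theorem~\ref{T: main}: for every bound $M$ there exists a prime pair with gap $g$ exceeding $M$. Iterating this yields an unbounded sequence of such prime pairs, giving infinitely many in total. The one subtlety to address is that Theorem~\ref{T: main} as stated produces prime pairs with gap $g$, not a priori \emph{consecutive} prime pairs; however, the pairs constructed in the proof of Theorem~\ref{T: numtwinprimes} arise from consecutive prospective prime pairs lying in the window $P_{k}<\widetilde{P}<P_{k+1}^{2}$, and in that window every prospective prime is an actual prime, so consecutive prospective primes are automatically consecutive actual primes. This observation converts the conclusion of Theorem~\ref{T: main} into the consecutive-prime statement required by the theorem.

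The main obstacle, if any, is bookkeeping rather than substance: one has to make sure the ``consecutive'' qualifier carries through from the prospective setting to the actual-prime setting, which is exactly where the $P_{k+1}^{2}$ cutoff in Theorem~\ref{T: numtwinprimes} is decisive. Once that point is observed, the proof reduces to the one-line chain Lemma~\ref{L: primegaps} $\Rightarrow$ hypothesis of Theorem~\ref{T: main} $\Rightarrow$ infinitely many consecutive prime pairs of gap $g=P_{k+1}-P_{k}$, completing the argument.
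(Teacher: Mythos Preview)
Your proposal is correct and matches the paper's own approach exactly: the paper states only that the theorem ``follows directly from Theorem~\ref{T: main} together with Lemma~\ref{L: primegaps}'' without further detail, and you have supplied precisely that chain of reasoning. Your additional remark about why the resulting prime pairs are \emph{consecutive} (via the $P_{k+1}^{2}$ cutoff in Theorem~\ref{T: numtwinprimes}) is a point the paper leaves implicit, so if anything your write-up is slightly more careful than the original.
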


Now consider the gaps between subsets, where we use the following definitions:

\begin{definition}
	
	\[
	\widetilde{P}_{\{k\}}^{(m)<}:=\min \left\{ \widetilde{P}\in S_{k}^{(m)} \right\}
	\]
	\[
	\widetilde{P}_{\{k\}}^{(m)>}:=\max \left\{ \widetilde{P}\in S_{k}^{(m)} \right\}
	\]
	
\end{definition}

Then the subset gap is defined as:

\begin{definition}
	\[
	g_{\Delta_{SS_{k}}}:=\widetilde{P}_{\{k\}}^{(m)<}-\widetilde{P}_{\{k\}}^{(m-1)>}
	\]
\end{definition}

\begin{lemma}\label{L: subsetgaps}
	Given set $S_{k}=\left\{N: 5 \le N \le 4+P_{k}\#\right\}$ and its $P_{k}$ subsets $S_{k}^{(m)}=\left\{N: 5+m\cdot P_{k-1}\# \le N \le 5+(m+1)\cdot P_{k-1}\#\right\}$ with $P_{k}-1$ associated gaps, $g_{\Delta_{SS_{k}}}$, then:
	\begin{equation}\notag
		g_{\Delta_{SS_{k}}}=
		\begin{cases}
			P_{k}-1 \quad \textrm{for $P_{k}-2$ gaps}\\
			P_{k}+1 \quad \textrm{for one gap}
		\end{cases}
	\end{equation}
\end{lemma}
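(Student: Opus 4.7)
The plan is to reduce everything to a residue analysis modulo $P_{k-1}\#$. Each subset $S_k^{(m)}$ is an interval of length $P_{k-1}\#$ and therefore contains exactly one representative of every residue class modulo $P_{k-1}\#$. Let $R$ denote the residues in $\{1,\dots,P_{k-1}\#-1\}$ coprime to $P_{k-1}\#$; the candidates in $S_k^{(m)}$ — elements coprime to every $P\le P_{k-1}$ — are precisely the values whose residue lies in $R$, and a prospective prime in $S_k$ is a candidate that is additionally coprime to $P_k$.

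First I would locate the smallest candidate of $S_k^{(m)}$ and the largest candidate of $S_k^{(m-1)}$. The smallest element of $R$ exceeding $1$ is the smallest prime greater than $P_{k-1}$, namely $P_k$, and the only element of $R$ in $\{0,1,2,3,4\}$ is $1$. Since $S_k^{(m)}$ traverses residues in the order $5,6,\dots,P_{k-1}\#-1,0,1,2,3,4$ as $N$ increases, the smallest candidate in $S_k^{(m)}$ equals $P_k+mP_{k-1}\#$ while the largest candidate in $S_k^{(m-1)}$ equals $1+mP_{k-1}\#$. Subtracting yields a \emph{nominal} subset gap of $P_k-1$.

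Next I would test, for $m\in\{1,\dots,P_k-1\}$, whether these two candidates survive the additional $P_k$-coprime condition. The smallest candidate $P_k+mP_{k-1}\#$ is divisible by $P_k$ iff $P_k\mid m$, which never occurs in this range, so it is always a prospective prime. The largest candidate $1+mP_{k-1}\#$ is divisible by $P_k$ iff $m\equiv\widehat{m}_1\pmod{P_k}$, where $\widehat{m}_1\equiv-(P_{k-1}\#)^{-1}\pmod{P_k}$; this $\widehat{m}_1$ is a single value in $\{1,\dots,P_k-1\}$, nonzero because $P_{k-1}\#$ is a unit modulo $P_k$. Hence for the $P_k-2$ values $m\in\{1,\dots,P_k-1\}\setminus\{\widehat{m}_1\}$ both endpoints are prospective primes and the gap is exactly $P_k-1$.

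Finally, for the single exceptional $m=\widehat{m}_1$ the largest candidate of $S_k^{(m-1)}$ is disallowed and I would fall back to the next lower candidate. Scanning downward from $1+mP_{k-1}\#$, the value $mP_{k-1}\#$ fails (residue $0$), but $mP_{k-1}\#-1$ has residue $P_{k-1}\#-1\in R$, so the fallback drops by exactly $2$. A one-line congruence check at $m=\widehat{m}_1$ gives $mP_{k-1}\#-1\equiv-2\pmod{P_k}$, which is nonzero for $P_k>2$, so this fallback is itself a prospective prime and the resulting gap equals $P_k+1$. The only step requiring genuine care is the identification of the second-largest candidate as $mP_{k-1}\#-1$ — so that the fallback distance is precisely $2$ — and the verification that it survives the $P_k$-coprime test; beyond that, the argument is pure bookkeeping.
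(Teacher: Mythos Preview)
Your proposal is correct and follows essentially the same route as the paper: both arguments identify the smallest prospective prime in $S_k^{(m)}$ as $P_k+mP_{k-1}\#$ (always surviving the $P_k$-test for $1\le m\le P_k-1$) and the largest in $S_k^{(m-1)}$ as $mP_{k-1}\#+1$, then locate the single $m$ at which this top element is killed by $P_k$ and fall back to $mP_{k-1}\#-1$, checking that the fallback survives. Your residue-class framing and the explicit congruence $mP_{k-1}\#-1\equiv -2\pmod{P_k}$ are a slightly cleaner packaging of exactly what the paper does via its propagation formula and the observation $\widehat{m}^{+}\neq P_k-1$.
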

\begin{proof}
	The smallest prospective prime in $S_{k-1}$ is $P_{k}$ and the largest two prospective primes in $S_{k-1}$ are $\widetilde{P}_{k-1}^{\pm}:=P_{k-1}\#\pm 1$.
	
	For $S_{k-1}\rightarrow S_{k}$ we use (\ref{E: nextproprime}) subject to the supplementary condition (\ref{E: defmhat}) to generate prospective primes in $S_{k}$. Note that given $P_{k}\in \widetilde{\mathbb{P}}_{k-1}$, then for $\widetilde{P}_{k}=P_{k}+mP_{k-1}\#$ all values of $m$ except $m=0$ are allowed, making $P_{k+1}$ the least prospective prime in the zeroth subset of  $\widetilde{\mathbb{P}}_{k}$, and making $P_{k}+mP_{k-1}\#$ the least prospective prime in all other subsets of $\widetilde{\mathbb{P}}_{k}$. Therefore we have:
	
	\begin{equation}\label{E: firstpprime}
		\widetilde{P}_{\{k\}}^{(m)<}=
		\begin{cases}
			P_{k+1}&\textrm{for}\quad m=0   \\
			P_{k}+m P_{k-1}\#&\textrm{for}\quad 1\le m\le P_{k}-1
		\end{cases}
	\end{equation}	
	and
	\begin{equation}\label{E: lastpprime}
		\widetilde{P}_{\{k\}}^{(m)>}=
		\begin{cases}
			(m +1)P_{k-1}\# +1 &\textrm{if}\quad  m \ne	\widehat{m}^+\\
			(m +1)P_{k-1}\# -1 &\textrm{if}\quad  m= 	\widehat{m}^+
		\end{cases}	
	\end{equation}	
	$\widehat{m}^+$ represents the disallowed subset for $\widetilde{P}_{\{k\}}=\widetilde{P}_{k-1}^{+}+mP_{k-1}\#$, which however is allowed for $\widetilde{P}_{k-1}^{-}=\widetilde{P}_{k-1}^{+}-2$, where:
	\[
	\widehat{m}^+=\frac{\alpha P_{k}-\widetilde{P}_{k-1}^{+}\bmod{P_{k}}}{P_{k-1}\#\bmod{P_{k}}}          \\ 
	\]
	
	In regards to (\ref{E: lastpprime}) note that $\widehat{m}^+\ne P_{k}-1$ because using the maximum value is always allowed for $P_{k-1}^{\pm}$ where using it in (\ref{E: nextproprime}) gives:
	
	\[
	\widetilde{P}_{k-1}^{\pm}+(P_{k}-1)P_{k-1}\# =P_{k-1}\#\pm 1 +(P_{k}-1)P_{k-1}\# =P_{k}\#\pm 1 =P_{k}^{\pm}
	\]
	
	Therefore, $\widehat{m}^+$ associated with $\widetilde{P}_{k-1}^{+}$ can only have a value in the range $0$ to $P_{k}-2$ associated with the greatest prospective prime in each subset of $\widetilde{\mathbb{P}}_{k}$. This leaves one subset of $\widetilde{\mathbb{P}}_{k}$, namely $\mathbb{P}_{k}^{(\widehat{m}^+)}$, $0\le \widehat{m}^+\le P_{k}-2$ where $\widehat{m}^+ P_{k-1}\#-1$ is the greatest prospective prime and where $mP_{k-1}\#+1$ is the greatest prospective prime in the remainder of the subsets.  Therefore, there are $P_{k}-2$ cases where:
	
	\[
	g_{\Delta_{SS_{k}}}=\widetilde{P}_{\{k\}}^{(m)<}-\widetilde{P}_{\{k\}}^{(m-1)>}= (P_{k}+mP_{k-1}\#)-( mP_{k-1}\# +1)=P_{k}-1
	\] 
	$1\le m\le P_{k}-1$, and $m-1 \ne \widehat{m}^+$;
	
	and one case where:
	\[
	g_{\Delta_{SS_{k}}}=\widetilde{P}_{\{k\}}^{(\widehat{m}^+)<}-\widetilde{P}_{\{k\}}^{(\widehat{m}^+ -1)>}= (P_{k}+\widehat{m}^+P_{k-1}\#)-( \widehat{m}^+P_{k-1}\# -1)=P_{k}+1
	\] 
\end{proof}

\begin{corollary}\label{C: twogapvalues} 
	Every set $S_{k}$ has at least $P_{k}-2$ prospective prime pairs with gap $g=P_{k}-1$ and at least one prospective prime pair with gap $g=P_{k}+1$	
\end{corollary}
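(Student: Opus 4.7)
The plan is to observe that Corollary~\ref{C: twogapvalues} is essentially a direct reading of Lemma~\ref{L: subsetgaps}, once one recognizes that every subset gap $g_{\Delta_{SS_k}}$ is itself a gap between a pair of consecutive prospective primes. Specifically, the pair $\bigl(\widetilde{P}_{\{k\}}^{(m-1)>},\widetilde{P}_{\{k\}}^{(m)<}\bigr)$ consists of the largest prospective prime in $S_k^{(m-1)}$ and the smallest prospective prime in $S_k^{(m)}$, and by the maximality/minimality in those definitions, together with the fact that $S_k^{(m-1)}$ and $S_k^{(m)}$ are adjacent and partition a consecutive block of $2P_{k-1}\#$ integers in $S_k$, there can be no prospective prime strictly between them.

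So first I would formally note: each of the $P_k-1$ indices $1 \le m \le P_k - 1$ produces a consecutive prospective prime pair $\bigl(\widetilde{P}_{\{k\}}^{(m-1)>},\widetilde{P}_{\{k\}}^{(m)<}\bigr)$ with gap $g_{\Delta_{SS_k}}$ (in the sense of Definition of consecutive prospective primes). Next, I invoke Lemma~\ref{L: subsetgaps}, which partitions these $P_k-1$ subset gaps into $P_k-2$ gaps of size $P_k-1$ and exactly one gap of size $P_k+1$. Each such gap therefore contributes at least one consecutive prospective prime pair in $S_k$ with the stated gap value, yielding at least $P_k-2$ prospective prime pairs with gap $P_k-1$ and at least one prospective prime pair with gap $P_k+1$.

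The word \emph{at least} is important and should be emphasized: the counts from Lemma~\ref{L: subsetgaps} only count subset-boundary pairs, whereas there may also be consecutive prospective prime pairs with these same gaps entirely inside some subset $S_k^{(m)}$, which would only increase the totals. There is no obstacle here beyond bookkeeping; the only step requiring a sentence of justification is the claim that $\bigl(\widetilde{P}_{\{k\}}^{(m-1)>},\widetilde{P}_{\{k\}}^{(m)<}\bigr)$ are consecutive in the sense of Definition~1, and this is immediate from the extremal characterization of $\widetilde{P}_{\{k\}}^{(m-1)>}$ and $\widetilde{P}_{\{k\}}^{(m)<}$ within their respective subsets together with the fact that $S_k^{(m-1)} \cup S_k^{(m)}$ is a contiguous interval.
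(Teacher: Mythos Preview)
Your proposal is correct and follows essentially the same approach as the paper: invoke Lemma~\ref{L: subsetgaps}, observe that each subset gap $g_{\Delta_{SS_k}}$ is by construction the gap between a consecutive prospective prime pair, and note that the counts are only lower bounds since additional pairs with the same gaps may occur within subsets. The paper's proof is just a terser version of what you wrote.
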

\begin{proof}
	This follows directly from Lemma~\ref{L: subsetgaps} recognizing that gaps between subsets are gaps between prospective prime pairs. The "at least" follows because internal to subsets there are prime pairs with gaps that may be the same or may differ from the subset gaps.	
\end{proof}

The following theorem follows directly from Theorem~\ref{T: main} and Corollary~\ref{C: twogapvalues}

\begin{theorem}\label{T: depolignacPkpm1}
	For all $P_{k}$ there exists infinitely many consecutive prime pairs with gaps $g=P_{k} \pm 1$.
\end{theorem}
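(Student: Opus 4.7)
The plan is to chain together the two results the author has already explicitly flagged, namely Corollary~\ref{C: twogapvalues} (which supplies prospective prime pairs of gap $P_k \pm 1$ inside a concrete set) and Theorem~\ref{T: main} (which promotes the existence of one such prospective prime pair to the existence of arbitrarily large actual prime pairs of the same gap). The argument is essentially a composition of these two statements, so the bulk of the work is bookkeeping rather than new mathematical input.

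First, I would fix an arbitrary prime $P_k$ and split into the two gap cases $g = P_k - 1$ and $g = P_k + 1$. In each case, Corollary~\ref{C: twogapvalues} produces the set $S_k$ itself as a witness: it contains at least $P_k - 2 \ge 1$ consecutive prospective prime pairs of gap $P_k - 1$, and at least one consecutive prospective prime pair of gap $P_k + 1$. Thus the hypothesis of Theorem~\ref{T: main} is satisfied with $S_r := S_k$ and the chosen $g$.

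Next, I would invoke Theorem~\ref{T: main} directly: for every bound $M \in \mathbb{N}$, there exists an actual consecutive prime pair of gap $g$ whose smaller member exceeds $M$. To turn this into the ``infinitely many'' statement in the theorem, I would argue by iterating: given a finite list $(p_1, p_1 + g), \dots, (p_n, p_n + g)$ of prime pairs of gap $g$, set $M := \max_i (p_i + g)$ and apply Theorem~\ref{T: main} to produce a further prime pair strictly beyond $M$, contradicting finiteness. Running this once for $g = P_k - 1$ and once for $g = P_k + 1$ yields both conclusions.

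I do not expect a genuine obstacle here, since both ingredients have already been established; the only subtle point is noticing that Theorem~\ref{T: main} as stated gives ``a prime pair greater than $M$'' rather than ``infinitely many,'' so one must perform the one-line upgrade above. The statement deliberately does not claim that these gap $P_k \pm 1$ pairs are \emph{consecutive} actual primes (only consecutive prospective primes are guaranteed by the construction), and I would flag this to avoid overclaiming: what is produced are prime pairs separated by gap $P_k \pm 1$, inherited from consecutive prospective prime pairs via the propagation machinery of Theorem~\ref{T: numtwinprimes}.
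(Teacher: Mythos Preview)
Your proof is correct and matches the paper's own argument exactly: the paper simply records that the theorem ``follows directly from Theorem~\ref{T: main} and Corollary~\ref{C: twogapvalues}'', which is precisely the chaining you describe. One small correction to your closing caveat: the machinery \emph{does} yield consecutive actual primes, not merely primes at distance $g$, because a consecutive prospective prime pair in $S_k$ lying between $P_k$ and $P_{k+1}^2$ has, by definition, no number between them coprime to all $P\le P_k$---hence no prime between them---so the statement's use of ``consecutive prime pairs'' is justified rather than an overclaim.
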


%% The Appendices part is started with the command \appendix;
%% appendix sections are then done as normal sections
%% \appendix

%% \section{}
%% \label{}

%% If you have bibdatabase file and want bibtex to generate the
%% bibitems, please use
%%
%%  \bibliographystyle{elsarticle-num} 
%%  \bibliography{<your bibdatabase>}

%% else use the following coding to input the bibitems directly in the
%% TeX file.

\end{document}